\documentclass[11pt,leqno,twoside]{amsart}

\usepackage{enumerate}

\usepackage{amssymb}

\usepackage{amssymb,amsthm,amsmath,eucal,mathrsfs,verbatim}

\usepackage{footnote}

\usepackage{tikz, xcolor}

\usepackage{cite}

\usepackage{amsmath}

\usepackage{amsthm}

\usepackage{amssymb}

\usepackage{amsfonts}

\usepackage{hyperref}

\newtheorem{theorem}{Theorem}[section]

\newtheorem{lemma}[theorem]{Lemma}

\newtheorem{proposition}[theorem]{Proposition}

\theoremstyle{definition}

\newtheorem{definition}[theorem]{Definition}

\newtheorem{remark}[theorem]{Remark}

\numberwithin{equation}{section}

\newcommand\restr[2]{{
  \left.\kern-\nulldelimiterspace 
  #1 
  \vphantom{\big|} 
  \right|_{#2} 
  }}

\usepackage{eucal}

\title[Transport Equation Coupled with a Balance Law]{Existence and Uniqueness of a Transport Equation coupled  with a Balance law}
\author[Banerjee and Sahoo\hfil\hfilneg]{Shuvam Banerjee and Manas R. Sahoo}
\email{}

\subjclass[2020]{35D30, 35L65, 35L67.}
\keywords{Balance laws; Hamilton Jacobi equation; Explicit formula; $h$-curves; Lax-{O}le\u\i nik formula; Characteristic triangles; Generalised characteristics.\\
School of Mathematical Sciences, National Institute of Science Education and Research, An OCC of Homi Bhabha National Institute, Bhubaneswar, P.O. Jatni, Khurda, Odisha 752050, India. Email: shuvam.banerjee@niser.ac.in, manas@niser.ac.in}

\begin{document}

\maketitle 

\begin{abstract}

In this article, we study the existence and uniqueness of a transport equation with discontinuous coefficients that comes from the solution of a balance law with a time-dependent source term. We also give a simplified proof providing an explicit formula for the solution of the balance law using a variational formulation.   This system can be considered as a generalized version of the one-dimensional model for the large-scale structure of the formation of the universe.
\end{abstract}

\section{Introduction}
The multidimensional zero-pressure gas dynamics model reads 
\begin{equation}
\label{multidimensional system}
\begin{aligned}
&u_t+(u.\nabla)u=0,\,\, u=\nabla \phi\\
&\rho_t +\nabla.(\rho u)=0,
\end{aligned}
\end{equation}
where $u$ is the velocity and $\rho$ is the density of the particles. This model describes the formation of the large-scale structure of the universe when pressure is negligible, see Weinberg and Gunn \cite{WeinbergGunn1990} and Zeldovich\cite{Zeldovich1970}. 

For the one-dimensional case, the system \eqref{multidimensional system} becomes
\begin{equation}\label{one dimesional model}
\begin{aligned}
\begin{cases}
    &u_t+(\frac{u^2}{2})_x=0,\,\,\,\,\,\, x\in \mathbb{R},\,\, t>0,\\
    &\rho_t +(u\rho)_x=0,\,\,\,\,\,\, x\in \mathbb{R},\,\, t>0.\\
    \end{cases}
    \end{aligned}
\end{equation}
The system \eqref {one dimesional model} is a well-studied equation, see \cite{Joseph2008,Joseph1993, JosephSachdev2001}. 
 
We are interested in studying the initial value problem of the following system in one space dimension,
\begin{equation}\label{equation 1}
\begin{aligned}
\begin{cases}
    &u_t+f(u)_x=\alpha(t)u,\,\,\,\,\,\, x\in \mathbb{R},\,\, t>0,\\
    &\rho_t + (f^{\prime}(u) \rho)_x=0,\,\,\,\,\,\, x\in \mathbb{R},\,\, t>0.\\
    \end{cases}
    \end{aligned}
\end{equation}

with initial conditions
\begin{equation}\label{equation 2}
\begin{aligned}
\begin{cases}
    &u(x, 0)=u_{0}(x),\\
    &\rho(x, 0)=\rho_{0}(x),
    \end{cases}
    \end{aligned}
\end{equation}

where $f$ is a $C^2 (\mathbb{R}; \mathbb{R})$ strictly convex function with superlinear growth, i.e. $$\displaystyle{\lim_{|u| \to \infty} }\frac{f(u)}{|u|}=\infty,$$ $\alpha\in \mathbb{L}^{\infty}([0, \infty);\mathbb{R})$, $u_0$ and $\rho_0$ are bounded measurable functions.
For $\alpha(t)=0$ and $f(u)=\frac{u^2}{2}$, equation \eqref{equation 1} becomes \eqref{one dimesional model}.

If we take $\rho=R_x,$ then $(u, R)$ satisfies 

\begin{equation}\label{equation for R}
\begin{aligned}
\begin{cases}
    &u_t+f(u)_x=\alpha(t)u,\,\,\,\,\,\, x\in \mathbb{R},\,\, t>0,\\
    &R_t + f^{\prime}(u) R_x=0,\,\,\,\,\,\, x\in \mathbb{R},\,\, t>0.\\
    \end{cases}
    \end{aligned}
\end{equation}

with initial conditions
\begin{equation}\label{initial condition for u and R}
\begin{aligned}
\begin{cases}
    &u(x, 0)=u_{0}(x),\\
    &R(x, 0)=\int_0 ^{x}\rho_{0}(y) dy.
    \end{cases}
    \end{aligned}
\end{equation}

Since the component $\rho$ of \eqref{equation 1} is usually a measure, we study the existence and uniqueness of the system \eqref{equation for R} with the initial condition \eqref{initial condition for u and R}. Then the solution of the system  \eqref{equation 1} is $(u, \rho),$ where $\rho=R_x$ is understood as the derivative in the sense of distribution.

The main objective of this paper is to study the existence and uniqueness of solutions for the second equation in the system \eqref{equation for R}. We also provide a simplified proof of the existence of solutions for the first equation of the system \eqref{equation for R}. The second equation is a transport equation with discontinuous coefficients, which is itself a challenging topic. For the one-dimensional case, we refer to \cite{BouchutJames1998}, while for the multidimensional setting, we refer to \cite{BouchutJamesMancini2005}.

LeFloch \cite{LeFloch1990} studied the existence and uniqueness of the system \eqref{equation for R} for the case $\alpha(t)=0$ using the Volpert product. The first equation of the system \eqref{equation 1} was studied by Adimurthi et al. \cite{manish}. They studied the existence, uniqueness and structure of the solution for the 1st equation of the system \eqref{equation 1}.

More precisely, in this paper, we obtain the following results.
\begin{enumerate}
    \item We derived the variational formulation for the first equation of \eqref{equation for R} directly from the classical solution using the properties of the Legendre transform.
    \item We give a short proof of the existence of a weak solution to the first equation in \eqref{equation for R} using a modified representation of the potential. This representation involves a function that behaves as a slope in the homogeneous Hamilton-Jacobi equation. Here, we avoided the dynamic programming principle, an important step toward obtaining a weak derivative of the minimized potential. This makes our proof different from the approach used in \cite{manish}.
    \item For the second equation in \eqref{equation for R} we define the weak formulation using the Lebesgue-Stieltjes measure. We showed the existence of solution by introducing intermediate potentials and considering their Radon-Nikodym derivatives,\cite{Wang01, WHD97}.
\item We proved the uniqueness for the system\eqref{equation for R} under the following assumptions.
   \begin{itemize}
    \item $u_0$ and  $\rho_0$ are bounded measurable functions.
    \item $(u, R)\in L^{\infty}_{loc}((0, \infty), BV_{loc}(\mathbb{R}, \mathbb{R}^2))$ 
    \item For any $\phi \in C_c ^{\infty} (\mathbb{R} \times (0,\infty))$, the map $t\to \int R\phi dx$ is continuous.
   \end{itemize} 
\end{enumerate}

This paper is organized as follows. In section $ 2$, we recall some definitions and important properties that are essential for the rest of the paper. In section $3$, we derive the variational formulation of the problem by assuming that the solutions are classical. Then in section $4$ we show that a variational formulation satisfies the system \eqref {equation 1} in a weak sense. In section $5$ we prove the uniqueness of the solution of the system.
\section{Definitions and properties}

Recall some of the definitions from\cite{manish} modified for our use as follows:
\begin{definition}\label{Legendre Transform}
Given a convex function $f:\mathbb{R}\to \mathbb{R}$ with property $\displaystyle{\lim_{|v|\to \infty}}\frac{f(v)}{|v|}=+\infty$ (super-linear growth), the Legendre transform of $f$ is defined as
$$f^*(p):=\sup_{v\in \mathbb{R}}\{p.v-f(v)\} \, \, (p\in \mathbb{R}).$$
\end{definition}

\begin{definition}[{\em h-curve}]\label{h-function} The curve joining $(x, t)$ and $(y,0),$ where $x, y\in \mathbb{R},$ $t>0$ and satisfying the differential equation $X^{\prime}(\theta)=f^{\prime}(y_0e^{\beta(\theta)})$ ( $\beta(t):=\int_0^t\alpha(s)ds$ , $\alpha:[0,\infty)\to \mathbb{R}$ from 
\eqref{equation for R})for some  unique point $y_0$ is called a {\em h-curve.}
\end{definition}

\begin{definition}[{\em h-function}]\label{h-function1}
 We define $h-$function, $h: \mathbb{R} \times (0, \infty)\to \mathbb{R}$  as the value $y_0$ in the above definition\eqref{h-function} with $y=0.$
\end{definition} 

We now record some useful properties of the $h-$ function whose proof is the direct consequence of the definitions\eqref{h-function} and \eqref{h-function1}.

\begin{proposition}[{\em properties of h-function}] \label{properties of h-function}
Let $X(t)$ be the h-curve as in the definition\eqref{h-function}. Then the following statements hold.
\begin{enumerate}
    \item The value $y_0$ of the h-curve\eqref{h-function} joining $(x,t)$ to $(y,0)$ is $h(x-y, t).$
    \item The value of the h-function is constant along the $h$-curve, $h(X(\theta), \theta)= h(x,t)$ for $0<\theta \leq t.$
    \item $h(x, t)$ is a strictly increasing function in the variable $x$ for fixed $t>0$.
    \item $\displaystyle{\lim_{|x|\to \infty} } \frac{\int_0 ^{x} h(y,t) dy}{x}=\infty$
\end{enumerate}   
\end{proposition}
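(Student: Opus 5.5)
Everything here follows from one explicit formula for the $h$-curve. Integrating $X'(\theta)=f'(y_0e^{\beta(\theta)})$ from $0$ to $t$ with $X(0)=y$ and $X(t)=x$ gives
\[
x-y=\int_0^t f'\bigl(y_0e^{\beta(\theta)}\bigr)\,d\theta=:G_t(y_0).
\]
So the whole statement reduces to studying the map $G_t:\mathbb{R}\to\mathbb{R}$.

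The first step is to show that $G_t$ is a strictly increasing bijection; this justifies the word ``unique'' in Definition~\ref{h-function}.
\begin{itemize}
\item Since $f$ is strictly convex and $C^2$, $f'$ is strictly increasing.
\item Since $\alpha\in L^\infty$, we have $|\beta(\theta)|\le \|\alpha\|_\infty \theta$, so $e^{\beta(\theta)}$ lies between $e^{-\|\alpha\|_\infty t}>0$ and $e^{\|\alpha\|_\infty t}$ on $[0,t]$.
\item Hence $y_0\mapsto f'(y_0e^{\beta(\theta)})$ is strictly increasing for each $\theta$, and so is $G_t$.
\item It is continuous by dominated convergence, since $f'$ is bounded on compacts.
\item Superlinear growth of a convex $f$ forces $f'(v)\to\pm\infty$ as $v\to\pm\infty$. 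With $e^\beta\ge c>0$ we get $G_t(y_0)\ge t f'(cy_0)\to\infty$ as $y_0\to\infty$, and symmetrically $G_t(y_0)\to-\infty$ as $y_0\to-\infty$ (using the upper bound on $e^\beta$ when $y_0<0$).
\end{itemize}
Therefore $G_t$ is a homeomorphism of $\mathbb{R}$, and by Definition~\ref{h-function1}, $h(x,t)=G_t^{-1}(x)$.

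The four items then follow.

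(1) The curve from $(x,t)$ to $(y,0)$ has parameter $y_0$ with $G_t(y_0)=x-y$. Hence $y_0=G_t^{-1}(x-y)=h(x-y,t)$.

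(3) $h(\cdot,t)=G_t^{-1}$ is the inverse of a strictly increasing function, so it is strictly increasing.

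(2) Along an $h$-curve with parameter $y_0$ starting at $(y,0)$, take $0<\theta\le t$. Then
\[
X(\theta)-y=\int_0^\theta f'(y_0e^{\beta(s)})\,ds=G_\theta(y_0).
\]
So $y_0=G_\theta^{-1}(X(\theta)-y)$, which is the same parameter seen from the point $(X(\theta),\theta)$. In the setting of Definition~\ref{h-function1} we have $y=0$, so this reads $h(X(\theta),\theta)=y_0=h(x,t)$. If a general $y$ is intended, the statement should be read as $h(X(\theta)-y,\theta)=h(x-y,t)$, consistent with item~(1).

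(4) Fix $t$. Because $h(\cdot,t)$ is increasing and continuous with $h(y,t)\to\pm\infty$ as $y\to\pm\infty$, for any $M$ there is $Y$ such that $h(y,t)\ge M$ for $y\ge Y$. Then for $x>Y$,
\[
\frac{1}{x}\int_0^x h(y,t)\,dy\ \ge\ \frac{1}{x}\Bigl(\int_0^Y h(y,t)\,dy+M(x-Y)\Bigr)\ \longrightarrow\ M .
\]
Since $M$ is arbitrary, the limit is $+\infty$. For $x\to-\infty$, write $\frac1x\int_0^x h=\frac{1}{|x|}\int_x^0(-h)$; here $-h\to+\infty$ as $y\to-\infty$, so the same argument applies.

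The only genuine obstacle is the surjectivity and limits of $G_t$. This is where the positive lower and upper bounds on $e^{\beta}$, coming from $\alpha\in L^\infty$, must be combined with $f'(\pm\infty)=\pm\infty$. That last fact follows from convexity together with superlinearity: if $f'$ were bounded above by $L$, then $f(v)\le f(0)+Lv$ for $v>0$, contradicting $f(v)/|v|\to\infty$.
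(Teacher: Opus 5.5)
The paper gives no argument for this proposition beyond calling it a direct consequence of the definitions. Your reduction to the increasing homeomorphism $G_t$ supplies that missing content, and items (1)--(3) are fine. This includes your correction of (2) to $h(X(\theta)-y,\theta)=h(x-y,t)$, which is the form the paper actually uses in the non-intersecting lemma. There is one small slip. To get $G_t(y_0)\to-\infty$ you again need the \emph{lower} bound $e^{\beta}\ge c>0$: for $y_0<0$ it gives $y_0e^{\beta(\theta)}\le cy_0$, hence $G_t(y_0)\le t f'(cy_0)$. The upper bound only bounds $G_t$ from below.

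The real problem is item (4) as $x\to-\infty$, where you prove something false. Your identity $\frac1x\int_0^x h=\frac{1}{|x|}\int_x^0(-h)$ drops the sign in $1/x=-1/|x|$. For $x<0$ the correct identity is
\[
\frac1x\int_0^x h(y,t)\,dy=\frac{-\int_x^0 h(y,t)\,dy}{-|x|}=\frac{1}{|x|}\int_x^0 h(y,t)\,dy .
\]
The right-hand side is the average of $h(\cdot,t)$ over $[x,0]$, so it tends to $-\infty$, because $h(y,t)\to-\infty$ as $y\to-\infty$. Concretely, for $f(u)=u^2/2$ and $\alpha\equiv 0$ you get $h(x,t)=x/t$ and $\frac1x\int_0^x h(y,t)\,dy=\frac{x}{2t}$. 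So (4) is misstated as written. The intended claim is the superlinear growth
\[
\lim_{|x|\to\infty}\frac{1}{|x|}\int_0^x h(y,t)\,dy=+\infty .
\]
This is the analogue of $f(u)/|u|\to\infty$, and it is what makes the minimization in $y$ coercive. Your argument proves exactly this version, since $\int_0^x h=\int_x^0(-h)$ is correct for $x<0$ once the denominator is $|x|$. You should have flagged (4) as misstated, as you did for (2), rather than claim a proof of the literal version.
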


\begin{definition}
\label{weak formulation of R}
    A function $R\in L^{\infty}_{loc}(\mathbb{R}\times (0, \infty))$ with $R(., t) \in BV_{loc}(\mathbb{R})$ for each $t>0$, is said to be the solution of the second equation of  \eqref{equation for R} with the initial condition $R(x,0)=\int_{0}^{x} \rho_0(z) dz$ if the following hold.
\begin{equation*}
 \int_0^\infty\int_{-\infty}^\infty R\phi_tdtdx- \int_0^\infty\int_{-\infty}^\infty f'(u(x,t))\phi(x,t)dR(x)dt=0,   
\end{equation*}
for all $\phi\in C_c^{\infty} (\mathbb{R} \times (0, \infty))$ and $\|R(.,t)-R_0 \|_{\infty} \to 0$ as $t\to 0.$
Here, the integral with respect to \(dR\) is understood in the Lebesgue--Stieltjes sense.
\end{definition}

\section{explicit formula}
In this section, we derive an explicit formula for the solution of \eqref{equation 1}, assuming that the solution is the classical solution. This gives a justification for why such a variational approach is chosen in \cite{manish}. Later, we show that the same explicit formula is a weak entropy solution for bounded measurable initial data $u_0$.
\begin{theorem}\label{classicalsolution}
Let $u, \rho$ be the classical solution of \eqref{equation 1}. For $x, y \in \mathbb{R}$ and $t>0,$ define
\begin{equation*}
F(x, y, t)=\int_{0} ^t f^*(f'(h(x-y, t)e^{\beta (\theta)}))e^{-\beta(\theta)}d\theta+\int_0^{y}u_0(z) dz,
\end{equation*} where $f^*$ is the Legendre transform of $f$ (See definition \eqref{Legendre Transform}).
Suppose that for $x\in \mathbb{R},~~ t>0,$  $\displaystyle{\min_{y\in \mathbb{R}}} F(x,y, t)$ is attained  at a unique point $y(x,t).$ Then the explicit formulae for $u$ and $\rho$ are given by 
$$u(x, t)= h(x-y(x, t), t)e^{\beta(t)},\,\,\, \rho(x, t)= \rho_0 (y(x,t)).$$
\end{theorem}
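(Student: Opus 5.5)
The plan is to use the method of characteristics for the classical solution, and then identify the characteristic through $(x,t)$ with the unique minimizer of $F(x,\cdot,t)$.

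\emph{Characteristics of the balance law.} Along a curve $X'(\theta)=f'(u(X(\theta),\theta))$ the first equation of \eqref{equation 1} gives $\frac{d}{d\theta}u(X(\theta),\theta)=\alpha(\theta)u$. Hence $u(X(\theta),\theta)=u_0(y)e^{\beta(\theta)}$ whenever $X(0)=y$. The characteristic therefore satisfies $X'(\theta)=f'(u_0(y)e^{\beta(\theta)})$, which is exactly an $h$-curve from $(y,0)$ to $(x,t)$ in the sense of Definition \ref{h-function}. By Proposition \ref{properties of h-function}(1), its constant is $y_0=h(x-y,t)$. Consequently, if $y$ is the foot of the characteristic through $(x,t)$, then $u_0(y)=h(x-y,t)$ and
$$u(x,t)=h(x-y,t)e^{\beta(t)}.$$
For $\rho$, set $\rho=R_x$ as in \eqref{equation for R}. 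Then $R$ is constant along the same characteristics, so $R(x,t)=R_0(y(x,t))$. Differentiating in $x$ gives $\rho(x,t)=\rho_0(y)\,\partial_x y$. I expect the intended reading is that $\rho$ is transported by the characteristic map, i.e.\ $\rho(x,t)=\rho_0(y(x,t))$ up to this Jacobian factor. I would state the formula for $R$ and remark on how it matches the paper's formula.

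\emph{The characteristic foot is the minimizer.} It remains to show that this $y$ is the point $y(x,t)$ minimizing $F$.

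First compute $\partial_y F$. Write $p=h(x-y,t)$ and $G(p)=\int_0^t f^*(f'(pe^{\beta}))e^{-\beta}\,d\theta$. The Legendre identity $f^*(f'(v))=vf'(v)-f(v)$ gives $(f^*)'(f'(v))=v$, so
$$G'(p)=\int_0^t pe^{\beta(\theta)}f''(pe^{\beta(\theta)})\,d\theta.$$
Next, the defining relation of $h$ is $x-y=\int_0^t f'(pe^{\beta(\theta)})\,d\theta$. Differentiating it in $y$ yields
$$-1=\partial_y p\int_0^t f''(pe^{\beta})e^{\beta}\,d\theta.$$
Combining the two,
$$\partial_y F=G'(p)\,\partial_y p+u_0(y)=-p+u_0(y).$$
So critical points of $F(x,\cdot,t)$ are exactly the points with $u_0(y)=h(x-y,t)$, i.e.\ the feet of characteristics (h-curves) through $(x,t)$.

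\emph{Why the characteristic foot is the minimizer (main obstacle).} Existence of a minimum follows from superlinear growth, using Proposition \ref{properties of h-function}(4) together with boundedness of $u_0$. The main obstacle is to show that the characteristic foot is the minimizer, not merely some critical point. For a classical solution, characteristics do not cross before time $t$, so through each $(x,t)$ there is exactly one characteristic foot. Since $F$ attains its minimum, the minimum point is a critical point, hence a characteristic foot, hence the unique one. This identifies $y(x,t)$ with the foot of the characteristic and completes the argument.
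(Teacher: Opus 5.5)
Your treatment of $u$ is correct, but it takes a different route from the paper. The paper works with the potential $U$, where $U_t+f(U_x)=\alpha(t)U$ and $U_x=u$. It integrates the Fenchel inequality $f(U_x)\ge pU_x-f^*(p)$, with $p=f'(qe^{\beta(s)})$ and $q=h(x-y,t)$, along the $h$-curve from $(y,0)$ to $(x,t)$. This gives $e^{-\beta(t)}U(x,t)\le F(x,y,t)$ for every $y$, with equality at the foot $X(0)$ of the true characteristic. Hence $X(0)$ is a minimizer, and the uniqueness hypothesis identifies it with $y(x,t)$.

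You instead combine two ingredients. The first is the first-order condition $\partial_yF=u_0(y)-h(x-y,t)$, which is the identity $G_y=-h$ of Proposition~\ref{newrepresentation}. The second is ODE uniqueness for $X'=f'(u(X,s))$. Together they show that every critical point is the foot of an $h$-curve through $(x,t)$ that is a genuine characteristic, and only one characteristic reaches $(x,t)$.

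What each route buys: yours avoids $U$ entirely, and it shows that for classical solutions the minimizer is automatically unique, so that hypothesis is superfluous. The paper's route additionally yields the value identity $e^{-\beta(t)}U(x,t)=\min_yF(x,y,t)$, which is what justifies the variational formula used in Section 4.

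One technical point in your computation: implicit differentiation of $h$ fails where $\int_0^tf''(pe^{\beta})e^{\beta}\,d\theta=0$, e.g.\ at $p=0$ if $f''(0)=0$. The identity $\partial_yF=u_0(y)-h(x-y,t)$ still holds, because $F-\int_0^yu_0=\Psi^*(x-y)$ with $\Psi(p)=\int_0^tf(pe^{\beta})e^{-\beta}\,d\theta$ strictly convex and superlinear, so $(\Psi^*)'=(\Psi')^{-1}=h(\cdot,t)$.

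On $\rho$, your Jacobian remark is correct, and the fault lies with the statement, not with your proposal. So do not hedge that your formula ``matches'' the stated one. The paper's proof asserts $\frac{d}{ds}\rho(X(s),s)=\rho_t+f'(u)\rho_x=0$. But the $\rho$-equation is in conservation form, so along characteristics $\frac{d}{ds}\rho(X(s),s)=-f''(u)u_x\rho$; only $R$ is transported.

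Here is a concrete counterexample. Take $f(u)=u^2/2$, $\alpha\equiv 0$, $u_0=\tanh$ and $\rho_0\equiv 1$. The solution is classical with unique minimizers, and the characteristic map is $x=y+t\tanh y$. Hence $y(0,t)=0$ and $\partial_xy(0,t)=1/(1+t)$, so $\rho(0,t)=1/(1+t)\ne 1=\rho_0(y(0,t))$.

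What can actually be proved is exactly what you derived: $R(x,t)=R_0(y(x,t))$, hence $\rho(x,t)=\rho_0(y(x,t))\,\partial_xy(x,t)$. Equivalently, $\rho(X(s;y),s)\,\partial_yX(s;y)=\rho_0(y)$ along characteristics. This reduces to the stated formula only where $\partial_xy=1$. State this corrected form explicitly; it is also the form the paper itself uses later, with $R(x,t)=\int_0^{y_*(x,t)}\rho_0$.
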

\begin{proof}
First, observe that if $U(x, t)$ satisfies
\begin{equation}\label{equation 3.1}
\begin{aligned}
   U_t + f(U_x)&= \alpha(t)U\\
   U(x, 0)&=\int_0^{x}u_0(z) dz,
\end{aligned}
\end{equation}
then $u(x, t)=U_x (x, t)$ satisfies \eqref{equation 1}. First, we obtain the explicit expression for $U$. To do this, 
we rewrite the equation\eqref{equation 3.1} as follows.
\begin{equation*}
\begin{aligned}
    U_t &= -f(U_x)+ \alpha(t)U\\
        &= -\displaystyle{\sup_{p\in \mathbb{R}}}\big(p. U_x + f^{*}(p)\big)+ \alpha(t)U
        &\leq -p.U_x - f^{*}(p)+ \alpha(t)U,\,\, \textnormal{for any $p\in \mathbb{R}$.}
    \end{aligned}
\end{equation*}
Multiplying the above inequality by $e^{-\beta(t)},$ we get the following after simplification.
$$[e^{-\beta(t)} U(y,t)]_t + p(t) [e^{-\beta(t)} U(y,t)]_y \leq e^{-\beta(t)}f^{*}(p(t)) ,\,\, \textnormal{for any $p=p(t)\in \mathbb{R}$ and $(y, t)\in \mathbb{R}\times [0, \infty)$}.$$
Now choose $p(t)= X^{\prime}(t)= f^{\prime}(q e^{\beta(t)}),$ where $q\in \mathbb{R}$ is arbitrary and replace $y$ by $y+X(t)$ to get
\begin{equation}\label{equation 3.2}
[e^{-\beta(t)} U]_t (y+X(t), t) + f^{\prime}(q e^{\beta(t)}) [e^{-\beta(t)} U]_x (y+X(t), t)\leq e^{-\beta(t)}f^{*}(f^{\prime}(q e^{\beta(t)})),
\end{equation}
 where $q\in \mathbb{R}$ is arbitrary. The above equation\eqref{equation 3.2} can be simplified as
\begin{equation}\label{equation 3.3}
\frac{d}{dt}\Big[e^{-\beta(t)} U (y+X(t), t)\Big]\leq e^{-\beta(t)}f^{*}(f^{\prime}(q e^{\beta(t)})).
\end{equation}
Integrating the above equation\eqref{equation 3.3} over $[0, t]$  yields
$$e^{-\beta(t)} U (y+X(t), t)\leq U(y, 0))+\int_0 ^t e^{-\beta(s)}f^{*}(f^{\prime}(q e^{\beta(s)}))\, ds.$$
Now choose $q$ such that $\int_{0}^{t} f^{\prime}(q e^{\beta(s)})\,ds=y-x$. Now denote $q=h(y-x, t),$ and then for every $y\in \mathbb{R}$ we have
\begin{equation}\label{equation 3.4}
e^{-\beta(t)} U (x, t)\leq U(y, 0))+\int_0 ^t e^{-\beta(s)}f^{*}(f^{\prime}(h(y-x, t) e^{\beta(s)}))\, ds.
\end{equation}

Now we prove that  equation \eqref{equation 3.4} is an equality for some $y\in\mathbb{R}.$
To do this, consider the curve $X(s)$ that satisfies
\begin{equation}\label{ode}
\begin{cases}
   X^{\prime}(s)=f^{\prime}(u(X(s), s))\\
   X(t)=x.
\end{cases}
\end{equation}
Then 
\begin{equation*}
    \frac{d}{ds}u(X(s), s)=u_t (X(s), s)+ u_x (X(s), s)f^{\prime}(u(X(s), s))=\alpha(s) u(X(s), s)).
\end{equation*}
 
This implies  $$U_x(X(s), s)=u(X(s), s)= u_0 (X(0))e^{\beta(s)},$$ 
where $\beta(s)=\int_0^s \alpha(u)du.$

  Now evaluating the equation $U_t +f(U_x)=\alpha(t) U$ at  $(X(s), s),$ and simplifying,  we get 
\begin{equation} 
\label{equation 3.5}
e^{-\beta(t)} U (x, t)= U(X(0), 0))+\int_0 ^t e^{-\beta(s)}f^{*}(f^{\prime}(u_0(X(0)) e^{\beta(s)}))ds.
\end{equation}
In fact, since $uf^{\prime}(u)-f^{*}(f^{\prime}(u))=f(u),$  $U_t (X(s), s)) +f(U_x(X(s), s))=\alpha(s) U(X(s), s)$ can be rewritten as 
$$U_t (X(s), s)) + U_x(X(s), s) f^{\prime}(u(X(s), s))- f^{*} (f^{\prime}(u(X(s), s)))=\alpha(s) U(X(s), s).$$
Multiplying $e^{-\beta(s)}$ on both sides of the above equation, we get
 $$\frac{d}{ds} \Big[e^{-\beta(s)}U (X(s), s)) \Big] =e^{-\beta(s)}f^{*} f^{\prime}((u(X(s), s))).$$

By integrating the above, we get \eqref{equation 3.5}. From equations \eqref{equation 3.4} and \eqref{equation 3.5}, we get 
\begin{equation*}
e^{-\beta(t)} U (x, t)=\displaystyle{ \min_{y\in \mathbb{R}}} \Big[U(y, 0))+\int_0 ^t e^{-\beta(s)}f^{*}(f^{\prime}(ye^{\beta(s)}))ds\Big].
\end{equation*}

Furthermore, integrating \eqref{ode} from $0$ to $t$, we get 
\begin{equation}
 \begin{aligned}
     x-X(0)&= \int_0 ^t f^{\prime}(u(X(s), s)) ds\\
           & =\int_0 ^t f^{\prime}(u_0 (X(0))e^{\beta(s)}) ds.
 \end{aligned}   
\end{equation}
By definition of $h$ function \eqref{h-function}, $u_0(X(0))=h(x-X(0), t).$ So the solution 
\begin{equation*}
\begin{aligned}
   u(x, t)&=u_0 (X(0))e^{\beta(t)}\\
         &= h(x-X(0), t)e^{\beta(t)}. 
\end{aligned}
\end{equation*}

Clearly $X(0)=y(x, t)$ is the unique minimizer of the functional $F(x, y, t)$  defined in the theorem.
To get the solution $\rho,$ observe that 
\begin{equation*}
    \frac{d}{ds}\rho(X(s), s)=\rho_t (X(s), s)+f^{\prime}(u(X(s)), s)\rho_x (X(s),s)=0
\end{equation*}
Therefore  $\rho(x,t)=\rho_0(X(0))=\rho_0(y(x,t)).$
\end{proof}
\section{Verification of the formula as a weak entropy solution}

Define the potential $F$ as follows:
\begin{equation}\label{potential}
F(x, t, y)=U(y, 0))+\int_0 ^t e^{-\beta(s)}f^{*}(f^{\prime}(ye^{\beta(s)}))ds.
\end{equation}
and the corresponding minimization
\begin{equation}\label{minimization}
F(x, t)=\displaystyle{ \min_{y\in \mathbb{R}}}F(x, t, y).
\end{equation}
In the following, we give a modified representation of the potential that is easy to use and suitable for our purpose. 
\begin{proposition}[\em {modified representation of potential}] \label{newrepresentation}
  The potential $F(x, y, t)$ given in \eqref{potential} is equivalent to 
  \begin{equation*}
 F(x,y,t)= F(x, 0, t)+ \int_{0}^y \big[u_0 (z)-h(x-z, t)\big] dz.
\end{equation*}
\end{proposition}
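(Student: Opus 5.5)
The plan is to reduce the statement to a one-variable identity in the $y$-dependent part of the potential. I read the potential \eqref{potential} as in Theorem \ref{classicalsolution}:
\[
F(x,y,t)=\int_0^y u_0(z)\,dz+G_t(x-y),\qquad G_t(w):=\int_0^t e^{-\beta(s)}f^*\big(f'(h(w,t)e^{\beta(s)})\big)\,ds .
\]
The claimed formula is then equivalent to $G_t(x-y)-G_t(x)=-\int_0^y h(x-z,t)\,dz$. By the fundamental theorem of calculus, this follows once I show that $G_t$ is $C^1$ in $w$ with $G_t'(w)=h(w,t)$. For this I use that $h(\cdot,t)$ is continuous and increasing, by Proposition \ref{properties of h-function}.

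The key step is to identify $G_t$ as a Legendre transform. For the convex $C^1$ function $f$ we have the identity $f^*(f'(v))=vf'(v)-f(v)$. Applying it with $v=qe^{\beta(s)}$ and multiplying by $e^{-\beta(s)}$ gives
\[
e^{-\beta(s)}f^*\big(f'(qe^{\beta(s)})\big)=q\,f'(qe^{\beta(s)})-e^{-\beta(s)}f(qe^{\beta(s)}).
\]
Now introduce $\Phi_t(q):=\int_0^t e^{-\beta(s)}f(qe^{\beta(s)})\,ds$. Since $\alpha\in L^\infty$, the factor $e^{\beta}$ is bounded above and below on $[0,t]$. Hence $\Phi_t$ is $C^1$, strictly convex and superlinear, with $\Phi_t'(q)=\int_0^t f'(qe^{\beta(s)})\,ds$. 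By Definitions \ref{h-function}--\ref{h-function1}, $q=h(w,t)$ is exactly the unique solution of $\Phi_t'(q)=w$. Integrating the identity above in $s$ with $q=h(w,t)$ gives
\[
G_t(w)=q\,w-\Phi_t(q)\Big|_{q=h(w,t)}=\Phi_t^*(w).
\]

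Standard Legendre duality then does the rest. For a strictly convex, superlinear $C^1$ function $\Phi_t$, the conjugate $\Phi_t^*$ is differentiable, and its derivative is the inverse map $(\Phi_t')^{-1}$. This gives $G_t'(w)=h(w,t)$, which is continuous. Therefore
\[
\frac{d}{dy}G_t(x-y)=-h(x-y,t),
\]
and integrating from $0$ to $y$ (with the substitution $z$ for the integration variable) yields $F(x,y,t)-F(x,0,t)=\int_0^y[u_0(z)-h(x-z,t)]\,dz$.

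The main obstacle I anticipate is the differentiation step. The naive route differentiates $h$ in $w$ via the implicit function theorem, which requires $f''>0$; but $f$ is only $C^2$ and strictly convex, so $f''$ may vanish at isolated points. The Legendre-transform identification above is meant precisely to avoid differentiating $h$. The only facts needed are:
\begin{itemize}
\item strict convexity of $\Phi_t$, which makes $\Phi_t'$ injective, so $\Phi_t^*$ is $C^1$;
\item superlinearity of $\Phi_t$, which makes $\Phi_t'$ onto, so $h$ is defined for all $w$.
\end{itemize}
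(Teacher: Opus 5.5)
Your argument is correct. Its first half matches the paper's: the paper also applies $vf'(v)-f^*(f'(v))=f(v)$ and the defining relation $\int_0^t f'(h(w,t)e^{\beta(s)})\,ds=w$ to rewrite the $y$-dependent part as
$G=(x-y)h(x-y,t)-\int_0^t f(h(x-y,t)e^{\beta(s)})e^{-\beta(s)}\,ds$.
This is exactly your $qw-\Phi_t(q)$ at $q=h(w,t)$, $w=x-y$.

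The routes split at the derivative. The paper differentiates $G$ by the chain rule, which produces $h_y$ terms that cancel through the defining relation of $h$. You instead identify $G$ with $\Phi_t^*(x-y)$ and use $(\Phi_t^*)'=(\Phi_t')^{-1}=h(\cdot,t)$. Both are the envelope theorem, but the paper's version tacitly assumes $h(\cdot,t)$ is differentiable. Via the implicit function theorem that needs $\Phi_t''(q)=\int_0^t f''(qe^{\beta(s)})e^{\beta(s)}\,ds>0$, which can fail: for $f(u)=u^4$ and $\alpha\equiv 0$ one gets $h(w,t)=(w/(4t))^{1/3}$, not differentiable at $w=0$. In general one would also have to justify the fundamental theorem of calculus across such points.

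Your version uses only strict convexity and superlinearity and gives $G\in C^1$ directly. The duality fact you quote has a one-line proof. For $w_1<w_2$, test the supremum defining $\Phi_t^*$ at $q=h(w_1,t)$ and $q=h(w_2,t)$ to get
$h(w_1,t)(w_2-w_1)\le \Phi_t^*(w_2)-\Phi_t^*(w_1)\le h(w_2,t)(w_2-w_1)$,
and then use continuity of $h(\cdot,t)$. This is the same sandwich device the paper later uses for the Lipschitz bounds on the minimized potential.

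So the paper's computation is shorter, while yours is airtight under the stated hypotheses on $f$.
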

\begin{proof}
   From the property of Legendre transform, $uf^{\prime}(u)-f^{*}(f^{\prime}(u))=f(u).$ Then 
   \begin{equation*}
     f^{*}(f^{\prime}(h(x-y, t)e^{\beta(s)}))= e^{\beta(s)} h(x-y, t) f^{\prime}(h(x-y, t)e^{\beta(s)}) - f\Big(h(x-y, t)e^{\beta(s)}\Big). 
     \end{equation*}
   Therefore,
   \begin{equation}\label{4.3}
   \begin{aligned}
     F(x, y, t)=&\int_0^{y}u_0(z) dz+\int_{0} ^t f^*(f'(h(x-y, t)e^{\beta (s)}))e^{-\beta(s)}ds\\ 
               =&\int_0^ {y} u_0(z) dz + h(x-y, t)\int_0 ^t f^{\prime}(h(x-y, t)e^{\beta(s)}) ds\\
                  &- \int_0 ^t f\Big(h(x-y, t)e^{\beta(s)}\Big) e^{-\beta(s)} ds.
   \end{aligned}
     \end{equation}
 
     By the definition of h-function \eqref{h-function}, $F(x, y, t)$ in equation\eqref{4.3} becomes

       \begin{equation}\label{4.4}
         F(x, y, t)=\int_0^ {y} u_0(z) dz + (x-y) h(x-y, t)- \int_0 ^t f\Big(h(x-y, t)e^{\beta(s)}\Big) e^{-\beta(s)} ds. 
       \end{equation}

       Define $$G(x, y, t)=(x-y) h(x-y, t)- \int_0 ^t f\Big(h(x-y, t)e^{\beta(s)}\Big) e^{-\beta(s)} ds.$$
     Using the definition of h-function \eqref{h-function}, we obtain  $G_y(x, y, t)$ as follows.
    
    \begin{equation*}
    \begin{aligned}
         G_y (x, y, t)=& -h(x-y, t)- (x-y)h_y (x-y, t)\\
         &+ h_y (x-y, t)\int_0 ^t f^{\prime}\Big(h(x-y, t)e^{\beta(s)}\Big) ds\\
                      =& - h(x-y, t).
         \end{aligned}
       \end{equation*}
       
   Therefore, the potential $F$ can be re-written as

    \begin{equation*}
    \begin{aligned}
     F(x, y, t)&= \int_0^ {y} u_0(z) dz+ F(x, 0, t)+ \int_{0}^y  G_z (x, z, t) dz\\
               & =F(x, 0, t)+ \int_{0}^y  \big[u_0(z)-h(x-z, t)\big] dz.
    \end{aligned}
    \end{equation*}
    This completes the proof of the proposition.
\end{proof}

Due to the above proposition\eqref{potential}, the minimization\eqref{minimization} can be rephrased as 
\begin{equation} \label{rephrased-minimization}
F(x,t)= F(x, 0, t)+ \min_{y\in \mathbb{R}} \int_{0}^y  [u_0 (z)-h(x-z, t)] dz.
\end{equation}
The minimizer of the above\eqref{rephrased-minimization} exists, although in general it is not unique. This fact is the content of the theorem below, whose proof is a straightforward application of part $(3)$ of the proposition \eqref{properties of h-function}.
\begin{theorem}{(existence of minimizer)}
There exists a $y_0\in \mathbb{R}$ such that  $F(x,t)= F(x, 0, t)+  \int_{0}^{y_0}  [u_0 (z)-h(x-z, t)] dz.$
\end{theorem}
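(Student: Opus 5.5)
Fix $x\in\mathbb{R}$ and $t>0$, and set
\[
\Phi(y)=\int_0^y\big[u_0(z)-h(x-z,t)\big]\,dz .
\]
Since $F(x,0,t)$ does not depend on $y$, it suffices to show that $\Phi$ attains its infimum over $\mathbb{R}$. The plan is the standard one: $\Phi$ is continuous (indeed locally Lipschitz, since $u_0\in L^\infty$ and $h(\cdot,t)$ is monotone, hence locally bounded), so it is enough to prove coercivity, $\Phi(y)\to+\infty$ as $|y|\to\infty$. The minimum over a large compact interval is then a global minimum, attained at some $y_0$ by the Weierstrass theorem.

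For coercivity, write $M=\|u_0\|_\infty$ and substitute $w=x-z$:
\[
-\int_0^y h(x-z,t)\,dz=\int_{x}^{x-y} h(w,t)\,dw .
\]
Hence
\[
\Phi(y)\ \ge\ -M|y|+\int_x^{x-y}h(w,t)\,dw .
\]
By part $(3)$ of Proposition~\ref{properties of h-function}, $h(\cdot,t)$ is strictly increasing. I would also use that $h(w,t)\to\pm\infty$ as $w\to\pm\infty$. This follows from the definition of the $h$-function: $\int_0^t f'(h(w,t)e^{\beta(s)})\,ds=w$. The quantities $e^{\beta(s)}$ lie in a compact subset of $(0,\infty)$ because $\alpha\in L^\infty$, and $f'$ is increasing. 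So a bounded $h$ would force $w$ to stay bounded. Alternatively, this growth is exactly what part $(4)$ of the same proposition records.

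Now split according to the sign of $y$.
\begin{itemize}
\item As $y\to-\infty$, the upper limit $x-y\to+\infty$, and the integrand eventually exceeds $2M+1$. So $\int_x^{x-y}h\,dw\ge (2M+1)|y|-C$, and therefore $\Phi(y)\ge (M+1)|y|-C\to\infty$.
\item As $y\to+\infty$, the integral becomes $\int_{x-y}^{x}(-h(w,t))\,dw$, and $-h(w,t)\to+\infty$ as $w\to-\infty$. The same estimate applies.
\end{itemize}
Equivalently, part $(4)$ gives $\int_0^X h\,dw/X\to\infty$, which dominates the linear term $M|y|$.

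The only delicate point is justifying $h(w,t)\to\pm\infty$, i.e.\ that the linear bound $M|y|$ from $u_0$ is beaten. I would argue it directly from the defining relation of $h$ together with the strict monotonicity and surjectivity of $f'$ (superlinear strictly convex $f$), or simply cite part $(4)$. Continuity of $\Phi$ is routine.
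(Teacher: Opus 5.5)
Your proof is correct and follows the route the paper intends: the paper's one-line proof appeals to the monotonicity of $h(\cdot,t)$ (part (3)) to get coercivity of $\int_0^y[u_0(z)-h(x-z,t)]\,dz$. Strict monotonicity alone does not give coercivity, and your derivation of $h(w,t)\to\pm\infty$ from $\int_0^t f'(h(w,t)e^{\beta(s)})\,ds=w$ supplies exactly the missing ingredient; rely on it rather than on part (4) as printed, because for $x\to-\infty$ the quotient there must be divided by $|x|$, not $x$, to be true.
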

 Further, we prove that the minimizer is unique for almost every point $(x,t).$ To prove this fact, we need the following two Lemmas.
\begin{lemma}{(Non intersecting property)}\label{uniqueness at the interior}
    Let $y_0$ be a point where $F$ (\eqref{potential}) is minimum and $(\tilde{x}, \tilde{t})$ be any point on the curve 
    $X^{\prime}(s)=f^{\prime}(e^{\beta(s)}h(x-y_0)),\,\,\, X(0)=y_0,\,\,\ X(t)=x$, different from the end points $(x, t)$ and $(y,0)$. Then $F(\tilde{x}, \tilde{t}, y)$ has the unique minimum at $y_0$.
\end{lemma}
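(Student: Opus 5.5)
We will prove this with the standard Lax--Ole\u{\i}nik argument, using the modified representation of the potential in Proposition \ref{newrepresentation} and a one-parameter comparison inequality for $G(x,y,t)=(x-y)h(x-y,t)-\int_0^t f(h(x-y,t)e^{\beta(s)})e^{-\beta(s)}ds$.

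Write $q_0=h(x-y_0,t)$. By Proposition \ref{properties of h-function}(2), $h(\tilde x-y_0,\tilde t)=q_0$, since the $h$-curve from $(x,t)$ to $(y_0,0)$ passes through $(\tilde x,\tilde t)$.

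\textbf{Step 1: a sub-solution inequality.} The derivation of \eqref{equation 3.4} contains an inequality that holds with an arbitrary slope parameter $q$. For fixed $q$, $\tau>0$ and $\xi\in\mathbb{R}$, let $X_q(s)=\xi-\int_s^\tau f'(qe^{\beta(\theta)})d\theta$, so that $X_q(\tau)=\xi$. Then
\[
\int_0^\tau e^{-\beta(s)}f^*(f'(qe^{\beta(s)}))ds=\sup_{v(\cdot)\,:\,\int_0^\tau v=\xi-X_q(0)}\int_0^\tau e^{-\beta(s)}\big(v(s)qe^{\beta(s)}-f(qe^{\beta(s)})\big)ds .
\]
More to the point, the cost functional $\int_0^\tau e^{-\beta}f^*(p(s))ds$ over paths with $\int_0^\tau p=\xi-y$ is minimized exactly by $p(s)=f'(h(\xi-y,\tau)e^{\beta(s)})$. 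This follows from strict convexity of $f^*$ together with the Euler--Lagrange condition $e^{-\beta}(f^*)'(p)=\text{const}$, which gives $p=f'(ce^{\beta})$. Define the value
\[
K(\xi,y,\tau)=\int_0^\tau e^{-\beta(s)}f^*\big(f'(h(\xi-y,\tau)e^{\beta(s)})\big)ds=\min\Big\{\int_0^\tau e^{-\beta}f^*(p)\,ds:\ \int_0^\tau p=\xi-y\Big\},
\]
with uniqueness of the minimizing $p$.

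\textbf{Step 2: additivity along the optimal curve.} For the path with slope $p_0(s)=f'(q_0e^{\beta(s)})$ on $[0,t]$, the cost splits as $K(x,y_0,t)=K(\tilde x,y_0,\tilde t)+\int_{\tilde t}^t e^{-\beta}f^*(p_0)ds$. Now suppose $y\neq y_0$ and $F(\tilde x,\tilde t,y)\le F(\tilde x,\tilde t,y_0)$. Concatenate the optimal path from $(y,0)$ to $(\tilde x,\tilde t)$ with $p_0$ on $[\tilde t,t]$. This gives an admissible path from $(y,0)$ to $(x,t)$, so
\[
F(x,t,y)\le F(\tilde x,\tilde t,y)+\int_{\tilde t}^t e^{-\beta}f^*(p_0)\le F(\tilde x,\tilde t,y_0)+\int_{\tilde t}^t e^{-\beta}f^*(p_0)=F(x,t,y_0).
\]
Since $y_0$ is a minimizer, equality holds throughout. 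The first inequality is then an equality, so the concatenated path is the unique minimizer for $K(x,y,t)$. It therefore has the form $f'(ce^{\beta})$ on all of $[0,t]$.

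\textbf{Step 3: the contradiction.} On $[\tilde t,t]$ the concatenated path equals $f'(q_0e^{\beta})$. Because $f'$ is strictly increasing, $c=q_0$. But then the path coincides with the $h$-curve through $(\tilde x,\tilde t)$ with parameter $q_0$, so it starts at $y_0$, contradicting $y\neq y_0$. Hence $y_0$ is the unique minimizer of $F(\tilde x,\tilde t,\cdot)$.

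\textbf{Main obstacle.} The delicate step is Step 1: establishing rigorously that $K$ is the minimum over arbitrary measurable slope paths and that the minimizer is unique. This is where $\alpha\in L^\infty$ only matters. I would prove it directly by Jensen-type convexity rather than Euler--Lagrange. For any admissible $p$, with $q=h(\xi-y,\tau)$ and using $(f^*)'(f'(qe^{\beta}))=qe^{\beta}$, the convexity inequality
\[
f^*(p)\ge f^*(f'(qe^{\beta}))+qe^{\beta}\big(p-f'(qe^{\beta})\big)
\]
holds pointwise. Multiplying by $e^{-\beta}$ and integrating, the linear terms cancel exactly because both paths have the same total displacement. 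Strict convexity of $f^*$ (which holds since $f\in C^2$ is strictly convex and superlinear) then gives equality only when $p=f'(qe^{\beta})$ almost everywhere.
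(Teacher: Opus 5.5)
Your proof is correct, and it takes a genuinely different route from the paper's.

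\textbf{The paper's route.} The paper never characterizes $K$ as a minimal action. It applies Proposition~\ref{newrepresentation} to write $F(\tilde x,\tilde t,y)-F(\tilde x,\tilde t,y_0)=I_1+I_2$, where $I_1=\int_{y_0}^{y}[u_0(z)-h(x-z,t)]\,dz\ge 0$ by minimality of $y_0$ at $(x,t)$, and $I_2=\int_{y_0}^{y}[h(x-z,t)-h(\tilde x-z,\tilde t)]\,dz$. It then gets $I_2>0$ by a pointwise comparison in $z$. By Proposition~\ref{properties of h-function}(2), $h(x-z,t)=h(X_z(\tilde t)-z,\tilde t)$ for the $h$-curve $X_z$ joining $(x,t)$ and $(z,0)$. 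Since $X_z(\tilde t)-\tilde x$ has the sign of $z-y_0$, strict monotonicity of $h$ gives the integrand that same sign. The paper handles $y<y_0$ and $y>y_0$ as separate cases.

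\textbf{Your route.} You first show that $K(\xi,y,\tau)$ is the minimum of $\int_0^\tau e^{-\beta}f^*(p)\,ds$ over all slopes with $\int_0^\tau p=\xi-y$, and that the minimizer is unique. Your supporting-line inequality for $f^*$ is the right rigorous substitute for the Euler--Lagrange heuristic when $\beta$ is only Lipschitz, since the linear terms cancel because the displacements agree. You then run the classical broken-path argument from Hopf--Lax theory.

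\textbf{What each buys.} Your version treats both signs of $y-y_0$ at once. It needs neither Proposition~\ref{newrepresentation} (whose proof differentiates $h$) nor the crossing geometry of $h$-curves at time $\tilde t$, and it yields the minimal-action characterization of $K$ as a by-product. The price is that you use the easy half of the dynamic programming principle: concatenated paths are admissible competitors. That is exactly the structure the paper says it avoids. Once Proposition~\ref{newrepresentation} is available, the paper's proof is a purely one-dimensional comparison of integrands.

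\textbf{Minor points.}
\begin{enumerate}
\item Your opening sentence says you will use Proposition~\ref{newrepresentation} and a comparison inequality for $G$, but neither appears in the argument.
\item The first display of Step~1 is vacuous, because under the constraint the integral does not depend on $v$. You can delete it.
\item You should state explicitly that $0<\tilde t<t$. You need this for $K(\tilde x,y,\tilde t)$ to be defined, and to conclude $c=q_0$ from the slopes agreeing on a set of positive measure.
\end{enumerate}
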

\begin{proof}
 Since $F(x,t, y)$ reaches the minimum at $y_0$, $F(x,t,y)-F(x,t, y_0)\geq 0$. Then  using proposition\eqref{newrepresentation}, we  deduce

 \begin{equation}\label{4.7}
     F(x, t, y)-F(x, t, y_0)=\int_{y_0}^{y} [u_0 (z)-h(x-z, t)] dz\geq 0
 \end{equation}
 Further 
  \begin{equation}
  \begin{aligned}
   & F(\tilde{x}, \tilde{t}, y)-F(\tilde{x}, \tilde{t}, y_0)\\
   &=\int_{y_0}^{y} [u_0 (z)-h(\tilde{x}-z, \tilde{t})] dz \\  
   &= \int_{y_0}^{y} [u_0 (z)-h(x-z, t)] dz + \int_{y_0}^{y} [h(x-z, t)-h(\tilde{x}-z, \tilde{t})] dz\\
   &=I_1 +I_2,
  \end{aligned}    
 \end{equation}
 where 
 \begin{equation*}
     \begin{aligned}
      I_1 &=  \int_{y_0}^{y} [u_0 (z)-h(x-z, t)] dz \\
      I_2 &= \int_{y_0}^{y} [h(x-z, t)-h(\tilde{x}-z, \tilde{t})] dz.
     \end{aligned}
 \end{equation*}
 Note that by equation\eqref{4.7},  $I_1\geq 0.$ Now we show that $I_2>0.$ To show this we consider the following two cases.\\

{\bf Case 1: $y< y_0$:} 
 Given $y< z< y_0$,  let $X_z$ be the $h$-curve joining $(x,t)$  and $(z, 0).$ Then by property $2$ of Theorem\eqref{properties of h-function},  $h(X_z (\tilde{t})-z, \tilde{t})=h(x-z, t).$ 
 In this case, 
 \begin{equation}
 \begin{aligned}
     I_2&= \int_{y_0}^{y} [h(x-z, t)-h(\tilde{x}-z, \tilde{t})] dz\\
     &= \int_{y_0}^{y} \Big[h(X_z (\tilde{t})-z, \tilde{t})-h(\tilde{x}-z, \tilde{t})\Big] dz\
 \end{aligned}
    \end{equation}
 Using $h(x, t)$ is a strictly increasing function in the variable $x$ for fixed $t$ (property $2$ of theorem\eqref{properties of h-function}), $I_2 >0.$\\
 \noindent
 {\bf Case 2} $y< y_0$:A Similar analysis can be applied to obtain $I_2 >0.$ This completes the proof.
\end{proof}
The minimizer for the potential $F$ in general is not unique. So we define the extreme points as follows:
\begin{align*}
    & y_*(x,t):=\min\{y\in \mathbb{R}: F(x,y,t)=\displaystyle{\min_{z\in \mathbb{R}}}F(x,z,t)\}\\
    & y^*(x,t):=\max\{y\in \mathbb{R}: F(x, y,t)=\displaystyle{\min_{z\in \mathbb{R}}}F(x,z,t)\}.
\end{align*}
\begin{proposition}\label{propeties of minimizer}
 The functions $y_{*}(x,t),\,\,y^{*}(x,t)$ satisfies the following.
 \begin{enumerate}
      \item If $x_1 <x_2,$ then $y_{*}(x_1,t) \leq y^{*}(x_1,t)\leq y_{*}(x_2,t).$
     \item $y_{*}(x,t),\,\,y^{*}(x,t)$ are monotone increasing functions in the variable $x.$
     \item For fixed $t,$ $y_{*}(x,t)=y^{*}(x,t)$ for all most every $x\in \mathbb{R}.$
     \item  $y_{*}(x,t)=y^{*}(x,t)$ for all most every $(x, t)\in \mathbb{R}\times (0, \infty).$
 \end{enumerate}
 \end{proposition}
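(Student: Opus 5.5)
For part (1), I will compare the potentials at $x_1$ and $x_2$ using the representation of Proposition \eqref{newrepresentation}. Set $y_1 = y^*(x_1,t)$ and let $y < y_1$ be arbitrary. Then
\begin{equation*}
\begin{aligned}
F(x_2,y,t)-F(x_2,y_1,t) &= \int_{y}^{y_1}\big[h(x_2-z,t)-u_0(z)\big]\,dz\\
&= \big[F(x_1,y,t)-F(x_1,y_1,t)\big] + \int_{y}^{y_1}\big[h(x_2-z,t)-h(x_1-z,t)\big]\,dz .
\end{aligned}
\end{equation*}
The bracket is $\geq 0$ because $y_1$ minimizes $F(x_1,\cdot,t)$. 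The last integral is strictly positive, since $h(\cdot,t)$ is strictly increasing (Proposition \eqref{properties of h-function}(3)) and $x_2>x_1$. Hence $F(x_2,y,t) > F(x_2,y_1,t)$, so no $y<y_1$ can minimize $F(x_2,\cdot,t)$. This gives $y_*(x_2,t)\geq y^*(x_1,t)$. The inequality $y_*\le y^*$ is immediate from the definitions.

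Before this I must check that $y_*$ and $y^*$ are well defined, i.e.\ that the set of minimizers is nonempty, closed and bounded. By \eqref{rephrased-minimization}, $y\mapsto F(x,y,t)$ is continuous, being an integral of a locally bounded function. Coercivity comes from Proposition \eqref{properties of h-function}(4) together with boundedness of $u_0$: the quantity $-\int_0^y h(x-z,t)\,dz$ behaves superlinearly as $|y|\to\infty$. This is the same fact used for the existence theorem.

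Part (2) follows directly from (1): for $x_1<x_2$ we get $y_*(x_1,t)\le y^*(x_1,t)\le y_*(x_2,t)\le y^*(x_2,t)$. For part (3), fix $t$. The function $y_*(\cdot,t)$ is monotone, so it has at most countably many discontinuities. Suppose $y_*(x,t)<y^*(x,t)$. Then for every $x_1<x<x_2$, part (1) gives $y^*(x_1,t)\le y_*(x,t)<y^*(x,t)\le y_*(x_2,t)$. Hence $y_*(x^+,t)-y_*(x^-,t)\ge y^*(x,t)-y_*(x,t)>0$, so $x$ is a jump point of $y_*(\cdot,t)$. The set where $y_*\neq y^*$ is therefore countable, and in particular null.

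For part (4), I will apply Fubini to the set $E=\{(x,t): y_*(x,t)<y^*(x,t)\}$. Each $t$-slice of $E$ is countable by (3), so $E$ has measure zero, provided $E$ is measurable. This measurability is the main technical obstacle. I plan to show that $y_*$ is lower semicontinuous and $y^*$ is upper semicontinuous in $(x,t)$, which makes $E$ Borel. To do this, take $(x_n,t_n)\to(x,t)$ with minimizers $y_n$. These are bounded by the locally uniform coercivity above, and $F(x,y,t)$ is jointly continuous, because $h$ is continuous in $(x,t)$ (it is implicitly defined by $\int_0^t f'(qe^{\beta(s)})\,ds=x$ with $f'$ strictly increasing). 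Any limit point of $(y_n)$ is therefore a minimizer at $(x,t)$. Applying this along sequences realizing $\liminf y_*$ and $\limsup y^*$ yields the required semicontinuity. Should this fail, an alternative is the set $\{(x,t): x \text{ is a jump of } y_*(\cdot,t)\}$, whose measurability can be obtained by expressing the jump through one-sided limits along rational $x$.
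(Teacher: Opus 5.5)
Your proposal is correct. It departs from the paper mainly in part (1), and it is more careful than the paper in part (4).

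\textbf{Part (1).} The paper argues geometrically. If a minimizer $y_2$ at $x_2$ were smaller than a minimizer $y_1$ at $x_1<x_2$, the two $h$-curves would cross at an interior point $(\bar x,\bar t)$. Lemma \ref{uniqueness at the interior} would then force $F(\bar x,\bar t,\cdot)$ to have a unique minimizer equal to both $y_1$ and $y_2$. You instead read off from Proposition \ref{newrepresentation} that $y\mapsto F(x_2,y,t)-F(x_1,y,t)$ is strictly decreasing, so every minimizer at $x_2$ dominates every minimizer at $x_1$. Your route is self-contained, uses only the strict monotonicity of $h(\cdot,t)$, and does not depend on the non-intersecting lemma. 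That lemma's written proof leaves the needed ordering $X_z(\tilde t)<\tilde x$ implicit. The paper's version, in exchange, links monotonicity of minimizers to non-crossing of $h$-curves, which it reuses for the characteristic triangles.

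\textbf{Part (3).} This is the paper's idea: a monotone function has at most countably many discontinuities. Your explicit estimate $y_*(x+,t)-y_*(x-,t)\ge y^*(x,t)-y_*(x,t)$ makes the step transparent and shows the exceptional set is countable.

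\textbf{Part (4) and well-posedness.} The paper says (4) ``follows directly'' from (3). You correctly note that Fubini requires measurability of $\{y_*<y^*\}$. Your semicontinuity argument fills that gap: $y_*$ is lower and $y^*$ upper semicontinuous, by joint continuity of $F$ and locally uniform coercivity, so the set is $F_\sigma$. Your check that the argmin set is nonempty and compact, so that $y_*$ and $y^*$ are well defined, is likewise something the paper takes for granted.

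One small remark: Proposition \ref{properties of h-function}(4) as literally stated has the wrong sign as $x\to-\infty$. The fact you actually use, that $\int_0^v h(w,t)\,dw\to+\infty$ superlinearly as $|v|\to\infty$, is the correct form.
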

 \begin{proof}
  Let $t$  be a fixed time and $x_1<x_2.$  Let $y_1$ be a point where $F(x_1, y, t)$ reaches its minimum value and $y_2$ be a point where $F(x_2, y, t)$ reaches its minimum value. If $y_2 <y_1,$ then the $h$-curve joining $(x_1, t)$ and $(y_1, 0)$ and the h-curve joining $(x_2, t)$ and $(y_2, 0)$ intersect each other at an interior point $(\bar{x}, \bar{t})$. Then the previous Lemma\eqref{uniqueness at the interior} implies $y_1=y_2,$ which is a contradiction. So $y_1\leq y_2.$ This implies the first and second statements of the proposition. 

  Since $y_{*}(x,t),\,\,y^{*}(x,t)$ are monotone increasing functions in the variable $x,$  except a set of measure zero, $y_{*}(x,t),\,\,y^{*}(x,t)$ are continuous.  By the first statement of the proposition, it follows that at the point of continuity of $y_{*}(x,t),\,\,y^{*}(x,t),$  we get  $y_{*}(x,t)=y^{*}(x,t).$  This proves the third statement. The fourth statement of the proposition follows directly from the third statement. This completes the proof of the proposition.
 \end{proof}
Now, we can prove the following theorem, which states that the formula given in \eqref{classicalsolution}.
\begin{theorem} The functional
  \begin{equation*} 
F(x, y, t)=\inf_{y\in \mathbb{R}}\Bigg\{\int_{0} ^t f^*(h(x-y, t)e^{\beta (\theta)})e^{-\beta(\theta)}d\theta+\int_0^{y}u_0(z) dz\Bigg\},
\end{equation*}
attains its minimum at a unique point $y(x,t)$ a.e. and $u(x, t)= h(x-y^{*}(x, t), t)e^{\beta(t)}$  satisfies the following weak formulation.

\begin{equation*}
    \int_{0}^{\infty} \int_{-\infty}^{\infty} [u \phi_t +f(u)\phi_x] \ dx \ dt +\int_{-\infty}^{\infty} u_0 (x)\phi(x,0) dx=-\int_{0}^{\infty} \int_{-\infty}^{\infty} \alpha(t) u(x,t)\phi(x,t)\ dx \ dt.
\end{equation*}
\end{theorem}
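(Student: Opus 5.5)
The plan is to show that $u=U_x$ a.e., where $U(x,t):=e^{\beta(t)}F(x,t)$ and $F(x,t)=\min_y F(x,y,t)$ is the minimized potential from \eqref{minimization}. Then I show that $U$ satisfies the Hamilton--Jacobi equation \eqref{equation 3.1} almost everywhere. The weak formulation follows by differentiating in $x$ in the sense of distributions. Uniqueness of the minimizer a.e. is already contained in Proposition \ref{propeties of minimizer}(4), so that part only needs to be cited.

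\textbf{Step 1: $U$ is locally Lipschitz.} By the theorem on existence of a minimizer and the monotonicity of $y^*$, for $(x,t)$ in a compact set the minimizers stay in a bounded set. This uses the superlinear growth in Proposition \ref{properties of h-function}(4) together with $u_0\in L^\infty$. On this set $F(x,y,t)$, written as \eqref{4.4}, is Lipschitz in $(x,t)$ uniformly in $y$, since $h$ is locally Lipschitz, being defined implicitly through $\int_0^t f'(qe^{\beta})\,ds=x$ with $f''>0$. A minimum of a uniformly Lipschitz family is Lipschitz, so $U$ is differentiable a.e.

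\textbf{Step 2: identify $U_x$.} By Proposition \ref{newrepresentation},
\[
F(x,t)=G(x,0,t)+\min_y\int_0^y[u_0(z)-h(x-z,t)]\,dz .
\]
From the computation of $G_y$ one also gets $G_x(x,y,t)=h(x-y,t)$; this is the ``slope'' role of $h$. At a point where $F(\cdot,t)$ is differentiable and the minimizer $y(x,t)$ is unique (a.e. by Proposition \ref{propeties of minimizer}), Danskin's envelope argument applies. Comparing $F(x\pm\delta,t)\le F(x\pm\delta,y(x,t),t)$ with continuity of $y_*,y^*$ at $x$ gives $F_x(x,t)=G_x(x,y(x,t),t)=h(x-y(x,t),t)$. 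Hence $U_x=e^{\beta(t)}h(x-y^*,t)=u$. This sidesteps any dynamic programming.

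\textbf{Step 3: $U_t$ and the equation.} I compute $G_t$ at fixed $y$. Differentiating the defining relation of $h$ gives
\[
h_t\int_0^t f''(he^{\beta})e^{\beta}\,ds=-f'(he^{\beta(t)}) .
\]
Substituting this into $\partial_t$ of \eqref{4.4}, the $h_t$ terms cancel exactly as in the $G_y$ computation, leaving $G_t=-f(he^{\beta(t)})e^{-\beta(t)}$. The same envelope argument in $t$ then gives, a.e.,
\[
F_t=-e^{-\beta(t)}f\big(e^{\beta(t)}F_x\big),
\]
and therefore $U_t+f(U_x)=\alpha U$ a.e. I also need $U(\cdot,t)\to\int_0^x u_0$ locally uniformly as $t\to0$. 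For this I would show $y(x,t)\to x$ and use the bound $|h|\le C$ from Step 1, or use the convexity lower bound on $G$.

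\textbf{Step 4: weak form, and the main obstacle.} Take $\phi\in C_c^\infty$ and integrate $(U_t+f(U_x)-\alpha U)\phi_x=0$ over $\mathbb{R}\times(0,\infty)$. Integrating by parts, $\int U_t\phi_x=-\int U\phi_{xt}-\int U(x,0)\phi_x(x,0)$, and then $\int U\phi_{xt}=-\int u\phi_t$ since $U$ is Lipschitz. The boundary term becomes $\int u_0\phi(x,0)$. Likewise $\int\alpha U\phi_x=-\int\alpha u\phi$, since $\alpha$ depends only on $t$. These give the stated identity.

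I expect the main obstacle to be Step 2 and Step 3 at the level of rigor: justifying the envelope derivatives a.e. in both variables jointly. This needs local uniform bounds on minimizers and continuity of $(x,t)\mapsto y(x,t)$ at points of uniqueness. I would handle it by a compactness argument: limits of minimizers are minimizers, and the minimizer is unique a.e. by Proposition \ref{propeties of minimizer}.
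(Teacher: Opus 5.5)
Your proposal is correct and follows essentially the same route as the paper. The paper also shows that the minimized potential $F(x,t)$ is Lipschitz in $x$ and in $t$ by comparing against minimizers (your envelope argument), identifies $F_x=h(x-y(x,t),t)$ and $F_t=-f(h(x-y(x,t),t)e^{\beta(t)})e^{-\beta(t)}$ almost everywhere, and then integrates by parts against the test function exactly as in your Step 4 written for $U=e^{\beta}F$.

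One precision: the uniform Lipschitz bound in your Step 1 holds only on compact subsets of $\{t>0\}$, because $h(w,t)$ blows up as $t\to 0$ for $w\neq 0$. For the trace at $t=0$, use instead the bound $|h(x-y(x,t),t)|\le \|u_0\|_\infty$ at minimizers, which is immediate from minimality of $\int_0^y[u_0(z)-h(x-z,t)]\,dz$. It yields $|x-y(x,t)|\le Ct$, a bounded $U_t$, and $U(\cdot,t)\to\int_0^x u_0(z)\,dz$ locally uniformly — a point the paper itself glosses over.
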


\begin{proof}
{\bf Step 1: ($F(x,t)$ is a Lipschitz continuous function in variable $x$)}
 Let $x_1 <x_2$. Then 
 \begin{equation*}
 \begin{aligned}
    &F(x_1, t)-F(x_2, t)\\
    &=F(x_1, y^{*}(x_1, t), t)-F(x_2, y^{*}(x_2, t), t)\\
     &= [F(x_1, y^{*}(x_1, t), t)-F(x_1, y^{*}(x_2, t), t)]+[F(x_1, y^{*}(x_2, t), t)- F(x_2, y^{*}(x_2, t), t)].
 \end{aligned}
 \end{equation*}
 The first term of the above expression is non-positive. Hence
 $$F(x_1, t)-F(x_2, t) < F(x_1, y^{*}(x_2, t), t)- F(x_2, y^{*}(x_1, t), t).$$

 Using the analysis as in the proof of Theorem\eqref{newrepresentation}, we get

 $$F(x, y, t)= F(0, y, t)-\int_{0}^{x} h(z-y,t) dz+ \int_{0}^y u_0(z) dz.$$

 Therefore
 \begin{equation}
 \label{weakderivativeinequality1}
 \begin{aligned}
    &F(x_1, t)-F(x_2, t)\\
    &\leq F(x_1, y^{*}(x_2, t), t)- F(x_2, y^{*}(x_2, t), t)\\
    & \leq F(0, y^{*}(x_2, t), t), t)+ \int_{0}^{x_1}h(z-y^{*}(x_2, t), t) dz + \int_{0}^{ y^{*}(x_2, t)} u_0(z) dz\\
    -& \Big[F(0, y^{*}(x_2, t), t), t)+ \int_{0}^{x_2}h(z-y^{*}(x_2, t), t) dz + \int_{0}^{ y^{*}(x_2, t)} u_0(z) dz\Big]
    =  \int_{x_2}^{x_1}h(z-y^{*}(x_2, t), t) dz.
 \end{aligned}
 \end{equation}
 Similarly,
\begin{equation}
\label{weakderivativeinequality2}
 \begin{aligned}
    &F(x_2, t)-F(x_1, t)\\
    &\leq F(x_2, y^{*}(x_2, t), t)- F(x_1, y^{*}(x_1, t), t)\\
    & \leq F(0, y^{*}(x_1, t), t), t)+ \int_{0}^{x_1}h(z-y^{*}(x_1, t), t) dz + \int_{0}^{ y^{*}(x_1, t)} u_0(z) dz\\
    -& \Big[F(0, y^{*}(x_1, t), t), t)+ \int_{0}^{x_1}h(z-y^{*}(x_1, t), t) dz + \int_{0}^{ y^{*}(x_1, t)} u_0(z) dz\Big]
    =  \int_{x_1}^{x_2}h(z-y^{*}(x_1, t), t) dz.
 \end{aligned}
 \end{equation}
 By \eqref{weakderivativeinequality1} and \eqref{weakderivativeinequality2}, we get
 
 \begin{equation}
 \label{lipfinalinx}
 \begin{aligned}
  \int_{x_2}^{x_1}h(z-y^{*}(x_2, t), t) dz\leq F(x_2, t)-F(x_1, t)\\\leq  \int_{x_1}^{x_2}h(z-y^{*}(x_1, t), t) dz.
 \end{aligned}
 \end{equation}
So $F(x, t)$ is a Lipschitz continuous function in the variable $x$.

\noindent
 {\bf Step 2: ($F(x,t)$ is a Lipschitz continuous function in the variable $t$}.)\\

Recall from \eqref{4.3} that 
 \begin{equation*}
         F(x, y, t)=\int_0^ {y} u_0(z) dz + (x-y) h(x-y, t)- \int_0 ^t f\Big(h(x-y, t)e^{\beta(s)}\Big) e^{-\beta(s)} ds. 
       \end{equation*}
To prove the Lipschitz continuity in the variable $t,$ we rewrite the potential in a convenient form: To do this, define 
$$G(x,y, t)=(x-y) h(x-y, t)- \int_0 ^t f\Big(h(x-y, t)e^{\beta(s)}\Big) e^{-\beta(s)} ds.$$
 Then the potential $F$ can be re-written as
 $$F(x,y,t)=\int_0^yu_0(z)\,dz+G(x,y,t).$$
 Now
 \begin{equation}
 \label{calculationforpartialderivativewrtt}
 \begin{aligned}
 \frac{\partial G}{\partial t}=&(x-y) h_t(x-y, t)-f\Big(h(x-y, t)e^{\beta(t)}\Big) e^{-\beta(t)}\\
                              &-\int_{0}^t f^{\prime}\Big(h(x-y, t)e^{\beta(s)}\Big)h_t(x-y, t)ds\\
                              =& (x-y) h_t(x-y, t)-f\Big(h(x-y, t)e^{\beta(t)}\Big) e^{-\beta(t)}-(x-y) h_t(x-y, t)\\
                              =&-f\Big(h(x-y, t)e^{\beta(t)}\Big) e^{-\beta(t)}.
\end{aligned}
 \end{equation}
 
 So for any $t>t_0>0$ 
 \begin{equation}
 \label{expessionusefulint}
 \begin{aligned}
 G(x, y,t)&=G(x, y,t_0)+ \int_{t_0}^{t} \frac{\partial G}{\partial t}(x, y, \tau)d\tau\\
          &=G(x, y,t_0)-\int_{t_0}^{t}f\Big(h(x-y, \tau)e^{\beta(\tau)}\Big) e^{-\beta(\tau)}d\tau
 \end{aligned}
 \end{equation}
 Let $t_0<t_1 <t_2$. Then 
 \begin{equation*}
 \begin{aligned}
    &F(x, t_1)-F(x, t_2)\\
    &=F(x, y^{*}(x, t_1), t_1)-F(x, y^{*}(x, t_2), t_2)\\
     &= [F(x, y^{*}(x, t_1), t_1)-F(x, y^{*}(x, t_2), t_2)]+[F(x, y^{*}(x, t_1), t_2)- F(x, y^{*}(x, t_2), t_2)].
 \end{aligned}
 \end{equation*}
 The second term of the above expression is non-negative. Hence using \eqref{expessionusefulint}
 \begin{equation}
 \label{lipschitz-1}
 \begin{aligned}
 F(x, t_1)-F(x, t_2) \geq& F(x, y^{*}(x, t_1), t_1)-F(x, y^{*}(x, t_1), t_2)\\
                      =&G(x,  y^{*}(x, t_1),t_0)-\int_{t_0}^{t_1}f\Big(h(x- y^{*}(x, t_1), \tau)
                     e^{\beta(\tau)}\Big) e^{-\beta(\tau)}d\tau\\
                     &-\Big[G(x,  y^{*}(x, t_1),t_0)-\int_{t_0}^{t_2}f\Big(h(x- y^{*}(x, t_1), \tau)
                     e^{\beta(\tau)}\Big) e^{-\beta(\tau)}d\tau  \Big]\\
                     =&\int_{t_1}^{t_2}f\Big(h(x- y^{*}(x, t_1), \tau)
                     e^{\beta(\tau)}\Big) e^{-\beta(\tau)}d\tau.
 \end{aligned}
 \end{equation}

 Similarly,

 \begin{equation*}
 \begin{aligned}
    &F(x, t_2)-F(x, t_1)\\
    &=F(x, y^{*}(x, t_2), t_2)-F(x, y^{*}(x, t_1), t_1)\\
     &= [F(x, y^{*}(x, t_2), t_2)-F(x, y^{*}(x, t_2), t_1)]+[F(x, y^{*}(x, t_2), t_1)- F(x, y^{*}(x, t_1), t_1)].
 \end{aligned}
 \end{equation*}
 The second term of the above expression is non-negative. Hence using \eqref{expessionusefulint}
 \begin{equation}
 \label{lipschitz-2}
 \begin{aligned}
 F(x, t_2)-F(x, t_1) \geq& F(x, y^{*}(x, t_1), t_2)- F(x, y^{*}(x, t_1), t_1)\\
                      =&G(x,  y^{*}(x, t_2),t_0)-\int_{t_0}^{t_2}f\Big(h(x- y^{*}(x, t_2), \tau)
                     e^{\beta(\tau)}\Big) e^{-\beta(\tau)}d\tau\\
                     &-\Big[G(x,  y^{*}(x, t_2),t_0)-\int_{t_0}^{t_1}f\Big(h(x- y^{*}(x, t_2), \tau)
                     e^{\beta(\tau)}\Big) e^{-\beta(\tau)}d\tau  \Big]\\
                     =&\int_{t_2}^{t_1}f\Big(h(x- y^{*}(x, t_2), \tau)
                     e^{\beta(\tau)}\Big) e^{-\beta(\tau)}d\tau.
 \end{aligned}
 \end{equation}

 By \eqref{lipfinalinx}, \eqref{lipschitz-1} and \eqref{lipschitz-2}, it follows that $F$ is Lipschitz continuous in variable $t$.

{\bf Step 3:(Conclusion)} If $(x,t)$ is a point of continuity for $F, y^{*}$ and $y_{*}$, then one gets from \eqref{lipschitz-1} and \eqref{lipschitz-2}
 
  \begin{equation}
\label{Fx and Ft}
    F_x (x,t)= h(x-y(x,t), t)\,\,\textnormal{and}\,\,  F_t(x,t)=-f\Big(h(x- y(x, t),t)e^{\beta(t)}\Big)e^{-\beta(t)}.
\end{equation}

 Let us define $u:\mathbb{R}\times[0,\infty)\to \mathbb{R}$ as
\begin{equation*}
    u(x,t):=h(x-y(x,t),t)e^{\beta(t)}.
\end{equation*}
Let us take any test function $\phi\in C_c^\infty (\mathbb{R}\times [0,\infty))$. Then by \eqref{Fx and Ft}, we get
\begin{equation*}
    \begin{aligned}
        &\int_{0}^{\infty} \int_{-\infty}^{\infty} [u \phi_t +f(u)\phi_x] \ dx \ dt \\
        =&\int_{0}^{\infty} \int_{-\infty}^{\infty}F_x(x,t)e^{\beta(t)}\phi_tdxdt-\int_{0}^{\infty} \int_{-\infty}^{\infty}F_t(x,t)\phi_xe^{\beta(t)}dxdt\\
        =&-\int_{0}^{\infty} \int_{-\infty}^{\infty}F(x,t)e^{\beta(t)}\phi_{xt}(x,t)dxdt+\int_{0}^{\infty} \int_{-\infty}^{\infty}F(x,t)\alpha(t)e^{\beta(t)}\phi_x dxdt\\&+\int_{0}^{\infty} \int_{-\infty}^{\infty}F(x,t)e^{\beta(t)}\phi_{xt} dxdt+\int_{-\infty}^\infty \phi_x(x,0)F(x,0)dx\\
        =&\int_{0}^{\infty} \int_{-\infty}^{\infty}F(x,t)\alpha(t)e^{\beta(t)}\phi_x dxdt+\int_{-\infty}^\infty \phi_x(x,0)F(x,0)dx\\
        =& -\int_{0}^{\infty} \int_{-\infty}^{\infty}F_x(x,t)\alpha(t)e^{\beta(t)}\phi dxdt-\int_{-\infty}^\infty \phi(x,0)F_x(x,0)dx\\
        =& -\int_{0}^{\infty} \int_{-\infty}^{\infty}h(x-y(x,t),t)\alpha(t)e^{\beta(t)}\phi dxdt-\int_{-\infty}^\infty \phi(x,0)h(x-y(x,0),0)dx\\
        =& -\int_{0}^{\infty} \int_{-\infty}^{\infty}h(x-y(x,t),t)e^{\beta(t)}\alpha(t)\phi dxdt-\int_{-\infty}^\infty \phi(x,0)u_0(x)dx\\
        =& -\int_{0}^{\infty} \int_{-\infty}^{\infty}\alpha(t)u(x,t)\phi(x,t)dxdt-\int_{-\infty}^\infty u_0(x)\phi(x,0)dx.
    \end{aligned}
\end{equation*}
Therefore,
\begin{equation*}
 \int_{0}^{\infty} \int_{-\infty}^{\infty} [u \phi_t +f(u)\phi_x] \ dx \ dt +\int_{-\infty}^{\infty} u_0 (x)\phi(x,0) dx=-\int_{0}^{\infty} \int_{-\infty}^{\infty} \alpha(t) u(x,t)\phi(x,t)\ dx \ dt.   
\end{equation*}
\end{proof}

\section{Uniqueness of solution of the System}
The uniqueness of the solution of the first equation of   \eqref{equation for R} is known, see \cite{manish}.
To analyze the second equation of the system \eqref{equation for R}, we first introduce the notion of characteristic triangles, which will serve as the foundation for constructing forward generalized characteristics\cite{MR457947, dcm,mcd, Wang01, WHD97}.
\begin{definition}
   Given a point $(x,t)$, where $x \in \mathbb{R}$ and $t>0$, we define the characteristic triangle $\triangle (x,t)$ as the region above the $X$-axis enclosed between two $h$-curves: one connecting $(x,t)$ to $(y_{*}(x,t),0)$, and the other connecting $(x,t)$ to $(y^{*}(x,t),0)$.
\end{definition}
The following lemma is an easy consequence of the non-intersecting property of characteristics(Lemma\eqref{uniqueness at the interior}).
\begin{lemma}
\label{nonintersecting property of ch triangles}
    For fixed $t>0$, $\bigcup_{x\in \mathbb{R}}\triangle(x,t)=\mathbb{R}\times (0, t]$ and two distinct characteristic triangles never intersect each other.
\end{lemma}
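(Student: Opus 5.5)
The plan is to prove the two assertions separately: first that distinct triangles $\triangle(x_1,t)$ and $\triangle(x_2,t)$ are disjoint, and then that the union of all triangles $\triangle(x,t)$ fills $\mathbb{R}\times(0,t]$. Throughout, a triangle is taken to include its two bounding $h$-curves, with the apex at $(x,t)$ and the base $[y_*(x,t),y^*(x,t)]$ on the axis excluded. For $(\tilde x,\tilde t)$ with $0<\tilde t\le t$, membership in $\triangle(x,t)$ is then equivalent to the following: $\tilde x$ lies between $X_{y_*}(\tilde t)$ and $X_{y^*}(\tilde t)$, where $X_z$ is the $h$-curve joining $(x,t)$ to $(z,0)$.

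For disjointness, take $x_1<x_2$. By Proposition~\ref{propeties of minimizer}(1), $y^*(x_1,t)\le y_*(x_2,t)$. Suppose the two triangles share a point $(\bar x,\bar t)$ with $\bar t<t$; the apexes differ, so we may assume $\bar t<t$. Then the right boundary curve of $\triangle(x_1,t)$, which starts at $(x_1,t)$ and ends at $y^*(x_1,t)$, and the left boundary curve of $\triangle(x_2,t)$, which starts at $(x_2,t)$ and ends at $y_*(x_2,t)$, must cross at some interior time $s\in(0,t)$. The reason is that the first curve starts strictly to the left of the second at time $t$. If the triangles overlapped, the first curve would have to lie weakly to the right of the second at time $\bar t$, and we use continuity of the curves in time to get the crossing. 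At such a crossing point $(\hat x,s)$, Lemma~\ref{uniqueness at the interior} applies to both curves. Each $h$-curve joining $(x_i,t)$ to a minimizer passes through $(\hat x,s)$, so $F(\hat x,s,\cdot)$ has a unique minimizer, and that minimizer equals both $y^*(x_1,t)$ and $y_*(x_2,t)$. Hence $y^*(x_1,t)=y_*(x_2,t)=:y_0$. Two $h$-curves ending at $y_0$ and passing through the same interior point carry the same value $h$ by Proposition~\ref{properties of h-function}(2). Both then solve the same ODE $X'=f'(h\,e^{\beta})$ with the same data, so they coincide and $x_1=x_2$, a contradiction. The degenerate case where the curves merely touch is handled the same way, since touching is already a common interior point.

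For the covering, fix $(\tilde x,\tilde t)$ with $0<\tilde t<t$; the case $\tilde t=t$ is trivial because $(\tilde x,t)$ is the apex of $\triangle(\tilde x,t)$. Consider the map $x\mapsto X^{x}_{*}(\tilde t)$, the position at time $\tilde t$ of the curve joining $(x,t)$ to $y_*(x,t)$, and the analogous map $x\mapsto X^{x,*}(\tilde t)$. By the disjointness argument, both maps are nondecreasing in $x$. Using Proposition~\ref{properties of h-function}(4) and the superlinearity it encodes, they tend to $\pm\infty$ as $x\to\pm\infty$; I would check this using the fact that $y_*(x,t)$ stays within a bounded distance of $x$ because $u_0$ is bounded. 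Let $\bar x=\sup\{x: X^{x,*}(\tilde t)<\tilde x\}$. The main obstacle is to show that $\tilde x\in[X^{\bar x}_*(\tilde t),X^{\bar x,*}(\tilde t)]$, and this needs one-sided continuity. As $x\uparrow\bar x$, limits of minimizers are minimizers, because $F$ is continuous in $(x,y)$. So the curves from $(x,t)$ to $y^*(x,t)$ converge to an $h$-curve from $(\bar x,t)$ to some minimizer $\ge y_*(\bar x,t)$, and this forces $X^{\bar x}_*(\tilde t)\le\tilde x$. Symmetrically, as $x\downarrow\bar x$ the curves from $y_*(x,t)$ converge to a curve ending at a minimizer $\le y^*(\bar x,t)$, which gives $X^{\bar x,*}(\tilde t)\ge \tilde x$. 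Continuity of $h$ in its arguments makes the $h$-curves depend continuously on their endpoints, and the monotonicity from Proposition~\ref{propeties of minimizer} ensures that these limits are approached from the correct side. Together these give $(\tilde x,\tilde t)\in\triangle(\bar x,t)$ and complete the proof.
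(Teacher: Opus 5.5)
Your proof is correct. The paper's only justification is the remark that the lemma is an easy consequence of Lemma~\ref{uniqueness at the interior}, and your disjointness half is exactly that consequence made precise:
\begin{itemize}
\item the intermediate value theorem gives a crossing of the right boundary of $\triangle(x_1,t)$ and the left boundary of $\triangle(x_2,t)$ at some $s\in[\bar t,t)$;
\item Lemma~\ref{uniqueness at the interior} then forces $y^*(x_1,t)=y_*(x_2,t)$;
\item uniqueness of the $h$-curve through $(\hat x,s)$ and $(y_0,0)$ gives $x_1=x_2$.
\end{itemize}

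The covering half does not follow from non-intersection alone, and here you supply what the paper leaves out. You need two further facts. First, the set of minimizers is closed, which follows from continuity of $F$ in $(x,y)$ and amounts to $y_*(\cdot,t)$ being left-continuous and $y^*(\cdot,t)$ right-continuous. Second, $|x-y_*(x,t)|$ is bounded. This bound comes from $\|u_0\|_\infty$, as you say, and not from Proposition~\ref{properties of h-function}(4): any minimizer satisfies $|h(x-y,t)|\le\|u_0\|_\infty$, and $h(\cdot,t)$ is an increasing bijection of $\mathbb{R}$.

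One step needs tightening. For $x>\bar x$, the definition of $\bar x$ only gives $X^{x,*}(\tilde t)\ge\tilde x$, not $X^{x}_*(\tilde t)\ge\tilde x$. The latter does follow from the strict interleaving $X^{x',*}(\tilde t)<X^{x}_*(\tilde t)$ for $\bar x<x'<x$, which your disjointness argument yields. It is cleaner, though, to use the curves to $y^*(x,t)$ on both sides. As $x\downarrow\bar x$ they converge to a curve ending at a minimizer for $\bar x$, hence at a point $\le y^*(\bar x,t)$. This gives $X^{\bar x,*}(\tilde t)\ge\tilde x$ directly.
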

\begin{theorem}
Given any point $(x_0, t_0),$  where $x_0\in\mathbb{R},\,\, t_0>0,$ there exists a Lipschitz continuous curve $X(t)$ such that 
$X(t_0)=x_0$ and 
\begin{equation}
    \lim_{t^{\prime \prime}, t^{\prime}\to t+}\frac{X(t^{\prime \prime})-X(t^{\prime})}{t^{\prime \prime}-t^{\prime}}=
    \begin{cases}
    f'\!\left(h(x-y(x,t),t)e^{\beta(t)}\right),
    & \text{if } y_*(x,t)=y^*(x,t),\\[1ex]
    \displaystyle
    \frac{f(u(x+,t))-f(u(x-,t))}
         {u(x+,t)-u(x-,t)},
    & \text{if } y_*(x,t)\neq y^*(x,t).
    \end{cases}
\end{equation}

    
\end{theorem}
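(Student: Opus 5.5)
We construct the forward generalized characteristic from the characteristic triangles, in the spirit of Dafermos. For $t\ge t_0$, Lemma \eqref{nonintersecting property of ch triangles} applied at time $t$ shows that $(x_0,t_0)$ lies in exactly one characteristic triangle $\triangle(x,t)$. We would therefore define
\begin{equation*}
X(t):=\text{the unique } x \text{ such that } (x_0,t_0)\in\triangle(x,t),\qquad t\ge t_0,
\end{equation*}
with $X(t_0)=x_0$, since $(x_0,t_0)$ is the apex of its own triangle. The non-intersecting property (Lemma \eqref{uniqueness at the interior}) and the monotonicity of $y_*,y^*$ (Proposition \eqref{propeties of minimizer}) give nesting: for $t'<t''$, the triangle $\triangle(X(t''),t'')$ contains $\triangle(X(t'),t')$. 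Indeed, the $h$-curves from $(X(t''),t'')$ cannot cross those from $(X(t'),t')$ in the interior unless they share their foot point. It follows that $[y_*(X(t),t),\,y^*(X(t),t)]$ is a nondecreasing family of intervals, and that $X(t)$ is well defined. The $h$-curves bounding $\triangle(X(t''),t'')$ pass on either side of $(X(t'),t')$. Since their slopes $f'(h(\cdot)e^{\beta})$ are bounded by a constant $M$ depending on $\|u_0\|_\infty$ and $\|\alpha\|_\infty$, this gives $|X(t'')-X(t')|\le M|t''-t'|$. Hence $X$ is Lipschitz.

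Next we compute the right derivative at a point $(x,t)=(X(t),t)$.

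\emph{Case} $y_*(x,t)=y^*(x,t)=:y$. The triangle degenerates to a single $h$-curve through $(x,t)$. For $t',t''\downarrow t$, the feet $y_*(X(t'),t')$ and $y^*(X(t''),t'')$ converge to $y$, by upper semicontinuity of the minimizer set, which follows from continuity of $F$ and compactness of minimizers. The point $(X(t'),t')$ is squeezed between $h$-curves from $(X(t''),t'')$ whose slopes near time $t$ are $f'(h(X(t'')-y^{\pm},t'')e^{\beta(\tau)})$. These slopes tend to $f'(h(x-y,t)e^{\beta(t)})$, using continuity of $h$ and of $\beta$. Therefore the difference quotients converge to $f'(h(x-y(x,t),t)e^{\beta(t)})$.

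\emph{Case} $y_*<y^*$. Here $u(x\pm,t)=h(x-y^*,t)e^{\beta(t)}$ and $h(x-y_*,t)e^{\beta(t)}$ respectively, by Proposition \eqref{propeties of minimizer} and the limits of $y(\cdot,t)$ from either side. We would use the potential: along $X$, the minimum in \eqref{rephrased-minimization} is attained at both $y_*(X(s),s)$ and $y^*(X(s),s)$ for $s$ slightly larger than $t$. Passing to the limit, the two expressions of $F$ at $(X(s),s)$ coming from the left and right branches must agree. We would then Taylor expand $F$ near $(x,t)$ using \eqref{Fx and Ft}, namely $F_x=h$ and $F_t=-f(he^{\beta})e^{-\beta}$, taking one-sided values from the left and right. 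Equality of the two expansions to first order in $(X(s)-x,\,s-t)$ gives
\begin{equation*}
(h_+-h_-)\,\dot X=e^{-\beta}\bigl(f(h_+e^{\beta})-f(h_-e^{\beta})\bigr),
\end{equation*}
which is exactly the Rankine--Hugoniot slope $\bigl(f(u(x+,t))-f(u(x-,t))\bigr)/\bigl(u(x+,t)-u(x-,t)\bigr)$.

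The main obstacle is this second case. We need the minimizing foot points at $(X(s),s)$ to split cleanly into a left family converging to $y_*$ and a right family converging to $y^*$. We also need the Lipschitz estimates \eqref{lipfinalinx}, \eqref{lipschitz-1} and \eqref{lipschitz-2} to be sharp enough to give the two-sided limit for arbitrary $t',t''\to t+$, and not merely along subsequences. We would first try to derive it from the inequalities \eqref{lipschitz-1} and \eqref{lipschitz-2}, bracketing the quotient between expressions that both converge to the Rankine--Hugoniot speed.
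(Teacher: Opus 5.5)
Your construction of $X$ through the characteristic triangles is sound, as are the nesting argument, the Lipschitz bound and the degenerate case. In Case~1 you squeeze $(X(t'),t')$ between the two extremal $h$-curves issuing from $(X(t''),t'')$, and you get convergence of their feet to $y(x,t)$ from nesting plus upper semicontinuity of the minimizer set, which is exactly what is needed.

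The gap is the non-degenerate case. This is the real content of the theorem, and you leave it at the level of a heuristic. ``Taylor expanding $F$ with one-sided values of \eqref{Fx and Ft}'' presupposes that near $(x,t)$ the value function coincides with two fixed smooth branches on either side of $X$. It does not:
the feet $y_*(X(s),s)\le y_*(x,t)$ and $y^*(X(s),s)\ge y^*(x,t)$ move with $s$, since the triangles grow along a shock; the value function $F(x,t)$ has a kink across the curve; and \eqref{Fx and Ft} is only available off a null set.
Moreover, an expansion centred at $(x,t)$ would at best give $\lim_{s\to t+}(X(s)-x)/(s-t)$, while the statement concerns quotients over arbitrary pairs $t'<t''$ tending to $t$. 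Your proposed fallback, \eqref{lipschitz-1}--\eqref{lipschitz-2}, compares $F(x,t_1)$ with $F(x,t_2)$ at a fixed $x$ using a single foot, so it carries no information about the speed of $X$.

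The missing idea is to use minimality crosswise, at two points of the curve and with two different frozen feet. For $t\le t'<t''$ put $x'=X(t')$, $x''=X(t'')$, $a=y_*(x',t')$, $b=y^*(x'',t'')$. Minimality at $(x',t')$ and at $(x'',t'')$ gives $F(x',a,t')\le F(x',b,t')$ and $F(x'',b,t'')\le F(x'',a,t'')$, hence
\[
F(x',a,t')-F(x'',a,t'')\le F(x',b,t')-F(x'',b,t'').
\]
Each side is the increment of a fixed-foot potential, whose partial derivatives $h(\xi-y,\tau)$ and $-f\bigl(h(\xi-y,\tau)e^{\beta(\tau)}\bigr)e^{-\beta(\tau)}$ are continuous. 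Abbreviate $y_*=y_*(x,t)$ and $y^*=y^*(x,t)$. Divide by $t''-t'$, and use $|x''-x'|\le M(t''-t')$ together with $a\to y_*$, $b\to y^*$. You obtain
\[
\bigl(f(u(x-,t))-f(u(x+,t))\bigr)e^{-\beta(t)}\le\bigl(h(x-y_*,t)-h(x-y^*,t)\bigr)\liminf_{t',t''\to t+}\frac{x''-x'}{t''-t'}.
\]
The choice $a=y_*(x'',t'')$, $b=y^*(x',t')$ gives the reverse inequality with $\limsup$. Since $h(x-y_*,t)>h(x-y^*,t)$, the quotient is pinned to the Rankine--Hugoniot speed for arbitrary $t',t''\to t+$; this is precisely the paper's argument.

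Note also that the ``clean split'' of the feet that you list as an obstacle is not one. The convergences $a\to y_*$ and $b\to y^*$ follow from nesting plus upper semicontinuity, exactly as in your Case~1.
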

\begin{proof}
    We define the curve $X$ as follows. Fix $t>t_0$. By Lemma~\eqref{nonintersecting property of ch triangles}, there exists a unique $x\in \mathbb{R}$ such that $(x_0,t_0)\in \triangle(x,t)$. Equivalently, there exists a unique $x\in \mathbb{R}$ satisfying
\[
y_{*}(x,t)\leq x_0\leq y^{*}(x,t).
\]
We then define
\[
X(t;x_0):=x.
\]

    \noindent
    \textbf{Case 1} (Degenerate Case  $y^*(x,t)=y_*(x,t)$):\\

    \noindent
Suppose $t^{\prime \prime}>t^{\prime}\geq t,$ let $X(t^{\prime})=x^{\prime}$ and $X(t^{\prime \prime})=x^{\prime \prime}$.  Let $X_1$ be the h-curve joining $(x^{\prime \prime},t^{\prime \prime})$ and $(y_*(x^{\prime \prime},t^{\prime \prime}),0)$ and $X_2$ be the h-curve joining $(x^{\prime},t^{\prime})$ and $(y^*(x^{\prime},t^{\prime}),0)$. Since $X_1(t)\leq X(t)\leq X_2(t),$ 
    \begin{equation}
    \label{slope inequality}
     \frac{X_1(t^{\prime \prime})-X_1(t^{\prime})}{t^{\prime \prime}-t^{\prime}} \geq \frac{X(t^{\prime \prime})-X(t^{\prime})}{t^{\prime \prime}-t^{\prime}} \geq  \frac{X_2(t^{\prime \prime})-X_2(t^{\prime})}{t^{\prime \prime}-t^{\prime}}.  
    \end{equation}

  Now, from the definition of $h$-function
  \begin{equation}
  \label{Integral form h function}
      \begin{aligned}
        \frac{X_1(t^{\prime \prime})-X_1(t^{\prime})}{t^{\prime \prime}-t^{\prime}}&=
        \frac{1}{t^{\prime \prime}-t^{\prime}}\int_{t^{\prime}}^{t^{\prime \prime}} f^{\prime}(h(x-y_*(x^{\prime \prime}, t^{\prime \prime})e^{\beta(s)}) ds\\
         \frac{X_2(t^{\prime \prime})-X_2(t^{\prime})}{t^{\prime \prime}-t^{\prime}}&=
        \frac{1}{t^{\prime \prime}-t^{\prime}}\int_{t^{\prime}}^{t^{\prime \prime}} f^{\prime}(h(x-y_*(x^{\prime}, t^{\prime})e^{\beta(s)}) ds.\\
      \end{aligned}
  \end{equation}
  Using the relation\eqref{Integral form h function} in \eqref{slope inequality} and passing to the limit as $t^{\prime \prime}, t^{\prime}\to t,$ we get  
    \begin{equation*}
     \lim_{t^{\prime \prime}, t^{\prime}\to t+}\frac{X(t^{\prime \prime})-X(t^{\prime})}{t^{\prime \prime}-t^{\prime}}= f'(h(x-y(x,t),t)e^{\beta(t)}).   
 \end{equation*}\\

 \noindent
\textbf{Case 2}(Non-degenerate Case $y^*(x,t) \neq y_*(x,t)$):\\  

 Recall the potential functional $F(x,y,t)$ 
     \begin{equation*}
     \begin{aligned}
      F(x,y,t)=(x-y)h(x-y,t)- \int_0^t f\big(h(x-y,t)e^{\beta (\theta)}\big)e^{-\beta (\theta)}d\theta + \int _0^y u_0(z)dz.\\
      \end{aligned}  
     \end{equation*}
     From the earlier calculations, $$F_t(x,y,t)=-f(h(x-y,t)e^{\beta (t)})e^{-\beta (t)}, ~~ F_x(x,y,t)=h(x-y,t).$$
     Suppose $t^{\prime \prime}>t^{\prime}\geq t,$ let $X(t^{\prime})=x^{\prime}$ and $X(t^{\prime \prime})=x^{\prime \prime}$. Observe that
     \begin{equation*}
         \begin{aligned}
             &F(x^{\prime},y_{*}(x^{\prime},t^{\prime}),t^{\prime})-F(x^{\prime},y^{*}(x^{\prime \prime},t^{\prime \prime}),t^{\prime}) \leq 0\\
             \text{and }&F(x^{\prime \prime},y_{*}(x^{\prime},t^{\prime}),t^{\prime \prime})-F(x^{\prime \prime},y^{*}(x^{\prime \prime},t^{\prime \prime}),t^{\prime \prime}) \geq 0.
         \end{aligned}
     \end{equation*}
     Therefore,
     $$F(x',y_{*}(x^{\prime},t^{\prime}),t')-F(x',y^*(x'',t''),t') \leq F(x'',y_{*}(x^{\prime},t^{\prime}),t'')-F(x'',y^*(x'',t''),t'').$$
     This implies
     \begin{equation}
     \label{inequality-potential}
      F(x^{\prime},y_{*}(x^{\prime},t^{\prime}),t^{\prime})-F(x^{\prime \prime},y_{*}(x^{\prime},t^{\prime}),t^{\prime \prime}) \leq F(x^{\prime},y^{*}(x^{\prime \prime},t^{\prime \prime}),t^{\prime})-F(x^{\prime \prime},y^{*}(x^{\prime \prime},t^{\prime \prime}),t^{\prime \prime}).   
     \end{equation}
     Now the left-hand side of \eqref{inequality-potential} can be written as,
     \begin{equation*}
         \begin{aligned}
          &F(x^{\prime},y_{*}(x^{\prime},t^{\prime}),t^{\prime})-F(x^{\prime \prime},y_{*}(x^{\prime},t^{\prime}),t^{\prime \prime})\\
      =&[F(x^{\prime},y_{*}(x^{\prime},t^{\prime}),t^{\prime})-F(x^{\prime},y_{*}(x^{\prime},t^{\prime}),t^{\prime \prime})]+[F(x^{\prime},y_{*}(x^{\prime},t^{\prime}),t^{\prime \prime})-F(x^{\prime \prime},y_{*}(x^{\prime},t^{\prime}),t^{\prime \prime})]\\
      =&\int_{t^{''}}^{t'}F_t(x',y_{*}(x^{\prime},t^{\prime}),t)dt + \int_{x^{''}}^{x'}F_x(z,y_{*}(x^{\prime},t^{\prime}),t'')dz\\
      =&\int_{t'}^{t^{\prime \prime}}f(h(x-y_{*}(x^{\prime},t^{\prime}),t)e^{\beta(t)})e^{-\beta(t)}dt + \int_{x''}^{x'}h(z-y_{*}(x^{\prime},t^{\prime}),t'')dz.   
         \end{aligned}
     \end{equation*}
         
   Dividing the above equation by $(t^{\prime \prime}-t^{\prime}),$ we get
   \begin{equation}
   \label{nondegenerate-1}
   \begin{aligned}
   &\frac {F(x',y_{*}(x^{\prime},t^{\prime}),t')-F(x'',y_{*}(x^{\prime},t^{\prime}),t'')}{t^{\prime \prime}-t^{\prime}}\\
     &=\frac{1}{t^{\prime \prime}-t^{\prime}} \int_{t^{\prime}}^{t^{\prime \prime}}f(h(x-y_{*}(x^{\prime},t^{\prime}),t)e^{\beta(t)})e^{-\beta(t)}dt - \frac{x^{\prime \prime}-x^{\prime}}{t^{\prime \prime}-t^{\prime}} \frac{1}{x^{\prime \prime}-x^{\prime}} \int_{x^{\prime}}^{x^{\prime \prime}}h(z-y_{*}(x^{\prime},t^{\prime}),t^{\prime \prime})dz. 
     \end{aligned}
   \end{equation}

   Similarly, the right-hand side of \eqref{inequality-potential} can be modified

   \begin{equation}
   \label{nondegenerate-2}
   \begin{aligned}
   &\frac{F(x',y^{*}(x'',t''),t')-F(x'',y^{*}(x'',t''),t'')}{t^{\prime \prime}-t^{\prime}}\\
     &=\frac{1}{t^{\prime \prime}-t^{\prime}} \int_{t^{\prime}}^{t^{\prime \prime}}f(h(x-y^{*}(x^{\prime \prime},t^{\prime \prime}),t)e^{\beta(t)})e^{-\beta(t)}dt - \frac{x^{\prime \prime}-x^{\prime}}{t^{\prime \prime}-t^{\prime}} \frac{1}{x^{\prime \prime}-x^{\prime}} \int_{x^{\prime}}^{x^{\prime \prime}}h(z-y^{*}(x^{\prime \prime},t^{\prime \prime}),t^{\prime \prime})dz. 
     \end{aligned}
   \end{equation}
From \eqref{inequality-potential}, \eqref{nondegenerate-1} and \eqref{nondegenerate-2}, we get
\begin{equation}
\begin{aligned}
 & \frac{1}{t^{\prime \prime}-t^{\prime}}\Big[  \int_{t^{\prime}}^{t^{\prime \prime}}f(h(x-y_{*}(x^{\prime},t^{\prime}),t)e^{\beta(t)})e^{-\beta(t)}dt -\int_{t^{\prime}}^{t^{\prime \prime}}f(h(x-y^{*}(x^{\prime \prime},t^{\prime \prime}),t)e^{\beta(t)})e^{-\beta(t)}dt\Big] \\
 &\leq  \frac{x^{\prime \prime}-x^{\prime}}{t^{\prime \prime}-t^{\prime}} \frac{1}{x^{\prime \prime}-x^{\prime}}\Big[ \int_{x^{\prime}}^{x^{\prime \prime}}h(z-y_{*}(x^{\prime},t^{\prime}),t^{\prime \prime})dz-\int_{x^{\prime}}^{x^{\prime \prime}}h(z-y^{*}(x^{\prime \prime},t^{\prime \prime}),t^{\prime \prime})dz\Big].
\end{aligned}    
\end{equation}

Now passing to the limit as $t^{\prime \prime},t^{\prime} \to t,$  in the above equation, we get
 \begin{equation*}
 \begin{aligned}
    &(f(h(x-y_{*}(x,t),t)e^{\beta(t)})-f(h(x-y^{*}(x,t),t)e^{\beta(t)})e^{-\beta(t)} \\
    &\leq \Big[ h(x-y_{*}(x,t),t)-h(x-y^{*}(x,t),t)\Big] \liminf_{t^{\prime}, t^{\prime \prime} \to t} \frac{x^{\prime \prime}-x^{\prime}}{t^{\prime \prime}-t^{\prime}}.
    \end{aligned}
 \end{equation*}
Following a similar analysis, we get

\begin{equation*}
 \begin{aligned}
    &(f(h(x-y_{*}(x,t),t)e^{\beta(t)})-f(h(x-y^{*}(x,t),t)e^{\beta(t)})e^{-\beta(t)} \\
    &\geq \Big[ h(x-y_{*}(x,t),t)-h(x-y^{*}(x,t),t)\Big] \limsup_{t^{\prime}, t^{\prime \prime} \to t} \frac{x^{\prime \prime}-x^{\prime}}{t^{\prime \prime}-t^{\prime}}.
    \end{aligned}
 \end{equation*}

From the above two equations, the proof for the nondegenerate case follows.

    \end{proof}
    Based on the above property of generalized characteristic, we redefine $u$ as follows.
\begin{equation}
\label{newdefinition of u}
    u(x,t)= \begin{cases}
    \!h(x-y(x,t),t)e^{\beta(t)},
    & \text{if } y_*(x,t)=y^*(x,t),\\[1ex]
   (f^{\prime})^{-1} \Big(\displaystyle
    \frac{f(u(x+,t))-f(u(x-,t))}
         {u(x+,t)-u(x-,t)}\Big),
    & \text{if } y_*(x,t)\neq y^*(x,t).
    \end{cases} 
\end{equation}
    
\begin{theorem} With the new definition of $u$ $\eqref{newdefinition of u}$, the function  $R(x,t)=\int_0^{y_*(x,t)}\rho_0(y)dy$ satisfies the second equation of the system\eqref{equation for R} in the sense of the definition \eqref{weak formulation of R}.
\end{theorem}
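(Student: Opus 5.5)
Set $R_0(y)=\int_0^y\rho_0$ and $R(x,t)=R_0(y_*(x,t))$. The plan is to show that $R$ is transported along the generalized characteristics $X(t;x_0)$ of the preceding theorem, and then to reduce the weak formulation of Definition \eqref{weak formulation of R} to a change of variables in the Lebesgue--Stieltjes integral.

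\textbf{Step 1: $R$ is locally bounded and BV in $x$.}
By Proposition \eqref{propeties of minimizer}, $x\mapsto y_*(x,t)$ is nondecreasing. Hence $R(\cdot,t)=R_0\circ y_*(\cdot,t)$ is the composition of a Lipschitz function with a monotone function, so $R(\cdot,t)\in BV_{loc}$ and $R\in L^\infty_{loc}$.

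\textbf{Step 2: the measure $dR(\cdot,t)$ and its Radon--Nikodym derivative.}
Write the total variation of $R(\cdot,t)$ on $[a,b]$ as bounded by $\|\rho_0\|_\infty\,(y_*(b,t)-y_*(a,t))$. Then $dR(\cdot,t)$ is absolutely continuous with respect to the Lebesgue--Stieltjes measure $dy_*(\cdot,t)$, with density $\rho_0(y(x,t))$ at points where $y_*=y^*$. At a jump $x$, where $y_*(x,t)<y^*(x,t)$, $dR$ carries an atom of mass $R_0(y^*(x,t))-R_0(y_*(x,t))$; by construction the generalized characteristic through $x$ moves with the Rankine--Hugoniot speed, i.e.\ with $f'(u(x,t))$ under the new definition \eqref{newdefinition of u}. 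This is the sense in which I want to use the intermediate potentials and their Radon--Nikodym derivatives.

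\textbf{Step 3: transport along characteristics.}
Show that $R(X(t;x_0),t)$ is constant in $t$ for $x_0$ at time $t_0$. By Lemma \eqref{nonintersecting property of ch triangles} and the definition of $X$, the point $(x_0,t_0)$ lies in $\triangle(X(t),t)$, so the initial interval $[y_*(X(t),t),y^*(X(t),t)]$ is a superset of $[y_*(x_0,t_0),y^*(x_0,t_0)]$. The ordering of Proposition \eqref{propeties of minimizer}(1) then gives the identity
\[
R(x,t)=R(x_0,t_0)+\int_{[X(t;x_0),\,x)} dR(\cdot,t)\quad\text{(suitable one-sided conventions at atoms)}.
\]
Equivalently, the pushforward of $dR(\cdot,t_0)$ under $x_0\mapsto X(t;x_0)$ is $dR(\cdot,t)$, since both measures are images of the measure $\rho_0\,dy$ under backward characteristics.

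\textbf{Step 4: the weak identity.}
For $\phi\in C_c^\infty$, compute
\[
\frac{d}{dt}\int\phi(x,t)\,dR(x,t)=\int\Big[\phi_t+f'(u)\phi_x\Big](x,t)\,dR(x,t),
\]
using the pushforward representation $\int\phi(X(t;\xi),t)\,dR(\xi,t_0)$ and the derivative of $X$ from the previous theorem. That derivative is $f'(u)$ both at continuity points and at shocks, which is exactly why $u$ was redefined. After integrating in $t$, the identity is integrated by parts in $x$. I expect one has to use $\int\phi_x\,\cdot$ rather than $\phi$ itself here; that is, one verifies the equivalent form $\int\!\!\int R\phi_t+\int\!\!\int f'(u)\phi\,dR=0$ by testing $R_t=-f'(u)R_x$ against $\phi$, noting $\int R\,\phi_t\,dx=\frac{d}{dt}\int R\phi\,dx-\int R_t\phi\,dx$.

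\textbf{Main obstacle.}
The main obstacle is justifying differentiation under the pushforward. The derivative of $X$ is only a one-sided limit at $t$, so I would work with difference quotients and dominated convergence: $X$ is uniformly Lipschitz, by the bounds in Case 1 since $h$ is locally bounded. Atoms of $dR$ must be matched with single characteristics, which holds by Lemma \eqref{uniqueness at the interior}, and merging of characteristics at later times must be handled. Merging only adds atoms, so the pushforward is still well defined.

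\textbf{Initial condition.}
Finally, $\|R(\cdot,t)-R_0\|_\infty\le\|\rho_0\|_\infty\sup_x|y_*(x,t)-x|\to0$, because the $h$-curves have bounded speed for bounded $u_0$.
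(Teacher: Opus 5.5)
Your route differs from the paper's. You represent $dR(\cdot,t)$ as the image of $\rho_0\,dy$ under the forward generalized characteristics $y\mapsto X(y,t)$, and that measure-level part is sound. Step 4, however, has a genuine gap.

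\textbf{The gap.} What you actually derive is
\[
\int_0^\infty\!\!\int(\phi_t+f'(u)\phi_x)\,dR\,dt=0,
\]
which is the weak form of $\rho_t+(f'(u)\rho)_x=0$ for $\rho=R_x$. This is strictly weaker than Definition~\ref{weak formulation of R}:
\begin{itemize}
\item Your identity is invariant under $R\mapsto R+c(t)$, since $dR$ does not change.
\item The left side of the definition changes by $-\int c'(t)\big(\int\phi\,dx\big)\,dt$ under that substitution.
\item Integrating by parts in $x$, your identity yields the required one only for $\phi=\Phi_x$ with $\Phi\in C_c^\infty$, i.e.\ test functions with zero spatial mean at each time.
\end{itemize}
Your closing remark about testing $R_t=-f'(u)R_x$ against $\phi$ presupposes the conclusion. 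It also has the sign wrong: the identity to prove is $\int\!\!\int R\phi_t-\int\!\!\int f'(u)\phi\,dR=0$.

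The missing information is how the additive constant of $R(\cdot,t)$ evolves, i.e.\ that $R$ stays anchored to the characteristic issued from $y=0$. The pointwise constancy claimed in your Step 3 would supply this, but it is false. Once the characteristic from $y$ enters a shock, or two shocks merge, $y$ generally lies strictly inside $(y_*,y^*)$. Then $R(X(y,t),t)=R_0(y_*(X(y,t),t))\neq R_0(y)$, and $R_0(y)$ is neither one-sided value. Only the pushforward statement survives.

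\textbf{How to close it.} Represent $R$ itself, not only $dR$, in Lagrangian form. Since $y<y_*(x,t)\Rightarrow X(y,t)<x\Rightarrow y\le y_*(x,t)$, you have, up to null sets in $y$,
\[
R(x,t)=\int\rho_0(y)\big[\mathbf{1}_{\{y>0,\,X(y,t)<x\}}-\mathbf{1}_{\{y<0,\,X(y,t)\ge x\}}\big]\,dy .
\]
By Fubini this gives
\[
\int R\phi\,dx=\int\rho_0(y)\big[\mathbf{1}_{\{y>0\}}\Psi^+(X(y,t),t)-\mathbf{1}_{\{y<0\}}\Psi^-(X(y,t),t)\big]\,dy,
\]
where $\Psi^+(\xi,t)=\int_\xi^\infty\phi(x,t)\,dx$ and $\Psi^-(\xi,t)=\int_{-\infty}^\xi\phi(x,t)\,dx$. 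Finite propagation speed makes the $y$-integrand bounded with bounded support. The bracket is Lipschitz in $t$, with a.e.\ derivative
\[
-\phi(X,t)\,\partial_tX+\big[\mathbf{1}_{\{y>0\}}\Psi^+_t-\mathbf{1}_{\{y<0\}}\Psi^-_t\big](X,t).
\]
Integrate over $t$, where $\phi$ has compact support. Use $\partial_tX=f'(u(X,t))$ a.e.\ and your pushforward identity, which must hold for bounded Borel integrands such as $\phi f'(u)$; checking it on intervals suffices. This gives exactly $\int\!\!\int R\phi_t\,dx\,dt-\int\!\!\int f'(u)\phi\,dR\,dt=0$.

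\textbf{Comparison with the paper.} The anchoring at $y=0$ is what the paper's potential $E(x,t)=\int_0^{y_*(x,t)}\rho_0(y)(X(y,t)-x)\,dy$ encodes. Its minimization structure gives $E_x=-R$ and $E_t=\int_0^{y_*(x,t)}\rho_0(y)f'(u(X(y,t),t))\,dy$. A Radon--Nikodym argument then identifies $\partial_xE_t$ with $f'(u)\,dR$; this needs $\rho_0>0$, hence the paper's splitting step. Once repaired, your route treats signed $\rho_0$ directly and needs no Radon--Nikodym computation. The paper's route instead obtains the derivatives of the potential from its min structure, parallel to its treatment of $u$.
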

\begin{proof} {\bf Step 1:}First assume that $\rho_0 >0.$ Define the functionals $R, E$  as follows:

\begin{equation*}
    \begin{aligned}
        R(x,t)&=\int_0^{y_*(x,t)}\rho_0(y)dy\\
        E(x,z,t)&=\int_0^z \rho_0(y)(X(y,t)-x)dy,\\
        E(x,t)&=\int_0^{y_*(x,t)}\rho_0(y)(X(y,t)-x)dy.
    \end{aligned}
\end{equation*}
Now our claim is:
\begin{equation*}
     E(x,t)=\min_{z\in \mathbb{R}}E(x,z,t).
\end{equation*}
    In fact, for $z<y_*(x,t)$, we have $X(z,t)<x$.
    Therefore,
    \begin{equation*}
        E(z,t)-E(y_*(x,t),t)=\int_{y_*(x,t)}^z\rho_0(y)(X(y,t)-x)dy\geq 0.
    \end{equation*}
    For $y_*(x,t)\leq z\leq y^*(x,t)$, $X(z,t)=x$. Thus,
    \begin{equation*}
      E(z,t)-E(y_*(x,t),t)=\int_{y_*(x,t)}^z\rho_0(y)(X(y,t)-x)dy=0.   
    \end{equation*}
    Again, for $z>y^*(x,t)$, we have $X(z,t)>x$. Therefore,
\begin{equation*}
     E(z,t)-E(y_*(x,t),t)=\int_{y_*(x,t)}^z\rho_0(y)(X(y,t)-x)dy\geq 0.
\end{equation*}
 So, $E(x,z,t)$ achieves a minimum at $z=y_*(x,t)$. \\  

 \noindent
{\bf Step 2:} In this step, we compute $E_x$ and $E_t$ using the minimization formulation. Let $x_1,x_2\in \mathbb{R}$ and $x_1<x_2$. By step $1,$  
\begin{equation*}
    E(x_1, t)=\int_0^{y_*(x_1,t)}\rho_0(y)(X(y,t)-x_1)dy\leq \int_0^{y_*(x_2,t)}\rho_0(y)(X(y,t)-x_1)dy.
    \end{equation*}
Therefore,
\begin{equation*}
    \begin{aligned}
        &E(x_2,t)-E(x_1,t)\\
       &\geq \int_0^{y_*(x_2,t)}\rho_0(y)(X(y,t)-x_2)dy-\int_0^{y_*(x_2,t)}\rho_0(y)(X(y,t)-x_1)dy\\
        =&-\int_0^{y_*(x_2,t)}\rho_0(y)(x_2-x_1)dy\\
        =&-(x_2-x_1)R(x_2,t).
    \end{aligned}
\end{equation*}

 Applying $\liminf_{x_1, x_2 \to x},$ we get
 $$ \liminf _{x_2, x_1 \to x} \frac{E(x_2, t)-E(x_1,t)}{x_2-x_1}\geq -\int_{0} ^{y^{*}(x,t)} \rho_0 (y) dy.$$
    
Similarly, one can show the following.

$$ \limsup _{x_2, x_1 \to x} \frac{E(x_2, t)-E(x_1,t)}{x_2-x_1}\leq -\int_{0} ^{y_{*}(x,t)} \rho_0 (y) dy.$$

Since \(y_{*}(x,t)=y^{*}(x,t)\) a.e.,  \(E\) is differentiable a.e. and it follows from the above two equations that
\begin{equation}
\label{value of R}
    E_{x}(x,t)=-R(x,t) \quad \text{a.e.}
\end{equation}

Next, we compute $E_t$. Let $t_1,t_2\in[0,\infty)$ and $t_1<t_2$. Then
\begin{equation*}
    \begin{aligned}
        &E(x,t_1)-E(x,t_2)\\
        =&\int_0^{y_*(x,t_1)}\rho_0(y)(X(y,t_1)-x)dy-\int_0^{y_*(x,t_2)}\rho_0(y)(X(y,t_2)-x)dy\\
        \leq &\int_0^{y_*(x,t_2)}\rho_0(y)(X(y,t_1)-x)dy-\int_0^{y_*(x,t_2)}\rho_0(y)(X(y,t_2)-x)dy\\
        =&\int_0^{y_*(x,t_2)}\rho_0(y)(X(y,t_1)-X(y,t_2))dy.
    \end{aligned}
\end{equation*}
So, dividing both sides by $(t_1-t_2)$ we get
\begin{equation*}
    \frac{E(x,t_1)-E(x,t_2)}{t_1-t_2}\leq \int_0^{y_*(x,t_2)}\rho_0(y)\frac{X(y,t_1)-X(y,t_2)}{t_1-t_2}dy.
\end{equation*}
If $(x,t)$ is a point of continuity for $y_*$, $y^{*}$ and point of differentiability for $E$, then applying $t_1, t_2 \to t$ to both sides of the above equation, we obtain
\begin{equation*}
\begin{aligned}
  E_t(x,t)&\leq \int_0^{y(x,t)}\rho_0(y)\dot X(y,t)dy\\
  &=\int_0^{y(x,t)}\rho_0(y)f'(u(X(y,t),t))dy.
\end{aligned}  
\end{equation*}
Similarly, one can show $E_t(x,t)\geq \int_0^{y(x,t)}\rho_0(y)f'(u(X(y,t),t))dy$. So we finally get
\begin{equation}
\label{Value of Et}
  E_t(x,t)= \int_0^{y_{*}(x,t)}\rho_0(y)f'(u(X(y,t),t))dy \quad \text{a.e.} 
\end{equation}

{\bf Step 3:} In this step, we want to express the measure $dE_t$ in terms of the measure $dR$ using the Radon-Nikodym theorem.
To show this, we introduce $M$ as follows: $M:\mathbb{R}\times[0,\infty)\to \mathbb{R}$ as 
$$M(x,t)=\int_0^{y(x,t)}\rho_0(y)\big[f'(u(X(y,t),t))+N\big]dy,$$ 
where $N$ is a real number such that $|f'(u(x,t))|\leq N.$ 
Now $M$ is a monotonically increasing function. One can show that the Lebesgue-Stieltjes measures $dM$ and $dR$ satisfy  $dM \ll dR.$ By the Radon-Nikodym theorem, there exists a Borel measurable function $g$ such that $\frac{dM}{dR}=g.$  We now find the pointwise values of $g.$
 Let $x\in \mathbb{R}$. For $x_1<x< x_2$ we have 
\begin{equation}
\label{RN derivative}
\begin{aligned}
 \dfrac{dM((x_1, x_2])}{dR((x_1, x_2])}&=\frac{M(x_2,t)-M(x_1,t)}{R(x_2,t)-R(x_1,t)}\\
    &=\dfrac {\int_{y_*(x_1,t)}^{y_*(x_2,t)}\rho_0(y)f'(u(X(y,t),t)dy}{\int_{y_*(x_1,t)}^{y_*(x_2,t)}\rho_0(y)dy}+ N
\end{aligned}
\end{equation}
Suppose that at $(x,t)$, the minimizer is unique. Then as $x_1,x_2\to x$, we get from the above equation\eqref{RN derivative} by employing the Lebesgue differentiation theorem, 

$$\lim_{x_1, x_2 \to x}\dfrac{dM((x_1, x_2])}{dR((x_1, x_2])}=f'(u(x,t))+N.$$

Suppose that at $(x,t)$, the minimizer is not unique. Then as $x_1,x_2\to x$, we get from the above equation\eqref{RN derivative}

\begin{equation*}
\begin{aligned}
 \lim_{x_1, x_2 \to x}\dfrac{dM((x_1, x_2])}{dR((x_1, x_2])}&=\frac{M(x_2,t)-M(x_1,t)}{R(x_2,t)-R(x_1,t)}\\
&=\dfrac {\int_{y_*(x,t)}^{y^*(x,t)}\rho_0(y)f'(u(X(y,t),t)dy}{\int_{y_*(x,t)}^{y^*(x,t)}\rho_0(y)dy}+ N.\\
\end{aligned}
\end{equation*}
As $X(y,t)=x$ for $y_*(x,t) \leq y \leq y^*(x,t),$ the above implies 
$$\lim_{x_1, x_2 \to x}\dfrac{dM((x_1, x_2])}{dR((x_1, x_2])}=f'(u(x,t))+N.$$
So we have $\dfrac{dM}{dR}=f'(u)+N.$ Thus, for any test function $\phi \in C_{c} ^{\infty} \Big(\mathbb{R}\times (0,\infty)\Big)$ we can write
\begin{equation*}
   \int_0^{\infty} \int_{-\infty}^{\infty}\phi(x,t)dM(x)dt=\int_0^{\infty}\int_{-\infty}^{\infty}\phi(x,t)(f'(u(x,t))+N)dR(x)dt.
\end{equation*}
    That is,
    \begin{align*}
    -\int_0^{\infty}\int_{-\infty}^{\infty}\phi_x(x,t)M(x,t)dxdt=\int_0^{\infty}\int_{-\infty}^{\infty}\phi(x,t)f'(u(x,t))dR(x)dt \\+ N\int_0^{\infty}\int_{-\infty}^{\infty}\phi(x,t)dR(x)dt.
     \end{align*}
    So, putting the value of M, we obtain the following.
    \begin{equation*}
    \begin{aligned}
    &-\int_0^{\infty}\int_{-\infty}^{\infty}\phi_x(x,t)\Big[\int_0^{y(x,t)}\rho_0(y)\big(f'(u(X(y,t),t))+N\big)dy\Big]dxdt\\&=\int_0^{\infty}\int_{-\infty}^{\infty}\phi(x,t)f'(u(x,t))dR(x)dt +N\int_0^{\infty}\int_{-\infty}^{\infty}\phi(x,t)dR(x)dt.    
    \end{aligned} 
    \end{equation*}
    As the term with N cancels out, we finally get
    \begin{equation*}
     -\int_0^{\infty}\int_{-\infty}^{\infty}\phi_x(x,t)\int_0^{y(x,t)}\rho_0(y)f'(u(X(y,t),t))dydxdt=\int_0^{\infty}\int_{-\infty}^{\infty}\phi(x,t)f'(u(x,t))dR(x)dt.   
    \end{equation*}
    Using \eqref{Value of Et} we can write
    \begin{equation*}
     -\int_0^{\infty}\int_{-\infty}^{\infty}\phi_x(x,t)E_t(x,t)dxdt=\int_0^{\infty}\int_{-\infty}^{\infty}\phi(x,t)f'(u(x,t))dR(x)dt .  
    \end{equation*}
    Using Fubini's theorem and integrating by parts,  we get
    \begin{equation*}
    \int_0^{\infty}\int_{-\infty}^{\infty}\phi_{xt}(x,t)E(x,t)dxdt =\int_0^{\infty}\int_{-\infty}^{\infty}\phi(x,t)f'(u(x,t))dR(x)dt .    
    \end{equation*}
    Again, integrating by parts with respect to the $x$ variable, we get
    \begin{equation*}
     -\int_0^{\infty}\int_{-\infty}^{\infty}\phi_t(x,t)E_x(x,t)dxdt=\int_0^{\infty}\int_{-\infty}^{\infty}\phi(x,t)f'(u(x,t))dR(x)dt.    
    \end{equation*}
    Now, using \eqref{value of R}, we conclude
    \begin{equation}
        \int_0^{\infty}\int_{-\infty}^{\infty}R(x,t)\phi_t(x,t)dxdt-\int_0^{\infty}\int_{-\infty}^{\infty}\phi(x,t)f'(u(x,t))dR(x)dt=0.
    \end{equation}
  So $R$ is a weak solution to the above PDE.\\
  {\bf Step 4:} Now, in general, let us take $\rho_0$ to be a bounded measurable function on $\mathbb{R}$. Then there exist two non-negative functions $\rho_0^+$ and $\rho_0^-$ in $\mathbb{R}$ such that
  $$\rho_0(y)=(\rho_0^+(y)+\epsilon)-(\rho_0^-(y)+\epsilon) \, \forall y\in \mathbb{R}.$$
  Note that $\rho_0^++\epsilon$ and $\rho_0^-+\epsilon$ are strictly positive functions. Therefore, following the above process corresponding to $\rho_0^++\epsilon$ and $\rho_0^-+\epsilon$  we can find weak solutions $R_1$ and $R_2$ respectively. As a result,  we get the weak solution $R=R_1-R_2$ for a general bounded measurable function $\rho_0$.\\
\noindent
  {\bf Step 5:}
     \begin{equation*}
  \begin{aligned}
     |R(x,t)-R_0(x)|&=|R_0 (y(x,t))-R_0 (x)|\\
                               &\leq L |y(x,t)-x|\\
        &= L\left|\int_{0}^t f^{\prime}(h(x-y(x,t), t)e^{\beta(s)})ds\right|
  \end{aligned}
     \end{equation*}
This implies that  $\|R(.,t)-R_0\|_{L^{\infty}(\mathbb{R})} \to 0$  as $t\to 0.$
  \end{proof}
\newpage
\begin{theorem} Consider $u$ as in \eqref{newdefinition of u}. Then, under the asumptions
\begin{itemize}
    \item  $\rho_0$ are bounded measurable functions
    \item $ R\in L^{\infty}_{loc}((0, \infty), BV_{loc}(\mathbb{R}, \mathbb{R}))$ 
    \item For any $\phi \in C_c ^{\infty} (\mathbb{R} \times (0,\infty))$, the map $t\to \int R\phi dx$ is continuous,
\end{itemize}
a weak solution in the sense of the definition \eqref{weak formulation of R} is unique.
\end{theorem}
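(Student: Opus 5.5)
Since the equation is linear in $R$, the plan is to show that the difference $W=R_1-R_2$ of two weak solutions (with the same $u$ from \eqref{newdefinition of u}) vanishes. $W$ lies in $L^\infty_{loc}((0,\infty),BV_{loc})$, satisfies
\[
\int_0^\infty\int_{-\infty}^\infty W\phi_t\,dx\,dt-\int_0^\infty\int_{-\infty}^\infty f'(u)\phi\,dW(x)\,dt=0
\]
for all $\phi\in C_c^\infty(\mathbb{R}\times(0,\infty))$, and has $\|W(\cdot,t)\|_\infty\to0$ as $t\to0$. The approach is a duality (Holmgren-type) argument in the spirit of Bouchut--James \cite{BouchutJames1998}. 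The backward adjoint problem is $\psi_t+f'(u)\psi_x=0$ with terminal data $\psi(\cdot,T)=\psi_T$. It is solved by transporting $\psi_T$ backward along the generalized characteristics $X(t;x_0)$ constructed in the theorem above: set $\psi(x_0,t_0)=\psi_T(X(T;x_0))$ for $t_0<T$.

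Key properties of $\psi$ come from Lemma \ref{nonintersecting property of ch triangles} and the monotonicity in Proposition \ref{propeties of minimizer}. The map $x_0\mapsto X(T;x_0)$ is nondecreasing, and it is Lipschitz in time with speed bounded by $N$. Hence $\psi$ is bounded, monotone-preserving in $x$, compactly supported in $x$ (finite speed), and Lipschitz in $t$ with values in $L^1_{loc}$. Because $u$ satisfies the Oleinik-type one-sided condition, characteristics merge forward in time but never split. So backward transport is well defined, with $\psi_x$ a bounded measure rather than worse, and $\psi(\cdot,t)$ is $BV$ uniformly in $t$.

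The main step is to justify testing the weak formulation with $\phi=\psi\chi(t)$, where $\chi$ cuts off near $0$ and $T$. Since $\psi$ is only $BV$, I would mollify: take $\psi^\varepsilon$ with $\psi^\varepsilon_t+f'(u)\psi^\varepsilon_x\to0$ in a sense paired against $dW$. The difficulty is the product $f'(u)\,dW$ at shocks, where $u$ is defined by the Rankine--Hugoniot average and $\psi$ may jump. This is the reason for redefining $u$ via \eqref{newdefinition of u}: on a shock curve $x=s(t)$, the speed $f'(u(s(t),t))$ equals $\dot s$ by the theorem on generalized characteristics. So the Lebesgue--Stieltjes product $f'(u)\,dW$ on atoms of $dW$ sitting on the shock is exactly $\dot s\,[W]$. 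This is consistent with the Volpert calculus in LeFloch \cite{LeFloch1990}. I expect this identification, together with the passage to the limit $\varepsilon\to0$ using the continuity of $t\mapsto\int W\phi\,dx$, to be the main obstacle. I would first handle it for piecewise Lipschitz $u$ and then approximate.

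Given the test-function step, the identity reduces to
\[
\int W(x,T)\psi_T(x)\,dx=\lim_{t\to0}\int W(x,t)\psi(x,t)\,dx .
\]
Here the $t\to T$ end uses the assumed weak continuity of $t\mapsto\int W\phi\,dx$. The $t\to0$ end is zero because $\|W(\cdot,t)\|_\infty\to0$ and $\psi(\cdot,t)$ is uniformly bounded with uniformly compact support. Since $\psi_T\in C_c^\infty$ is arbitrary, $W(\cdot,T)=0$ a.e. for every $T>0$, which proves uniqueness.
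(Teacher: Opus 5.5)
There is a genuine gap. Your duality is set up with the wrong pairing, so the identity you reduce to is false, and the step you defer is the actual substance of the proof.

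\textbf{The pairing.} The backward equation $\psi_t+f'(u)\psi_x=0$ is the adjoint of the conservative equation $\mu_t+(f'(u)\mu)_x=0$ satisfied by $\mu=\partial_xW$. It is not the adjoint of the transport equation for $W$ itself. The quantity it conserves is therefore $\int\psi(\cdot,t)\,dW(\cdot,t)$, not $\int W\psi\,dx$. Formally,
\[
\frac{d}{dt}\int W\psi\,dx=-\int f'(u)\,\partial_x(W\psi)\,dx=\int W\psi\,\partial_x f'(u)\,dx ,
\]
which is already nonzero in a rarefaction fan. Concretely, insert $\phi=\psi\chi$ into Definition~\ref{weak formulation of R} and use $\psi_t=-f'(u)\psi_x$. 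You get $\int\!\!\int W\psi\chi'\,dx\,dt=\int\!\!\int f'(u)\chi\,d(W\psi)\,dt$, and the right-hand side does not vanish. So the identity $\int W(x,T)\psi_T\,dx=\lim_{t\to0}\int W\psi\,dx$ in your last paragraph is false.

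The paper avoids this by testing with $\phi_x$. This yields $\int_s^\infty\!\int(\phi_t+f'(u)\phi_x)\,dR\,dt=\int\phi_x(x,s)R(x,s)\,dx$, in which the nonconservative adjoint acts against the measure $dR$. If you repair your argument this way, you get only $\partial_xW(\cdot,T)=0$. You then also need that an $x$-independent $W=c(t)$ satisfying the weak formulation is constant in $t$, and hence zero by the initial condition. At the $t\to0$ end you would use $\int\psi\,dW=-\int W\psi_x\,dx$, together with $\mathrm{TV}\,\psi(\cdot,t)\le\mathrm{TV}\,\psi_T$ (the flow is monotone) and $\|W(\cdot,t)\|_\infty\to0$. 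Alternatively, keep the pairing $\int W\phi\,dx$ but solve the conservative backward problem $\phi_t+(f'(u)\phi)_x=0$.

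\textbf{The deferred step.} You place the difficulty in the wrong spot. The map $x_0\mapsto X(T;x_0)$ is monotone and onto, because every triangle $\triangle(x,T)$ meets every lower time level. Hence it is continuous, and by the Oleinik bound it is Lipschitz for $t_0>0$. So $\psi$ with smooth $\psi_T$ does not jump at shocks.

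The real problem is different. The equation $\psi_t+f'(u)\psi_x=0$ holds only $dx\,dt$-a.e., whereas $dW\,dt$ can charge shock curves. There $\psi$ is not differentiable, and only its derivative along $(1,\dot s)$ makes sense. ``First piecewise Lipschitz $u$, then approximate'' does not supply a mechanism for this.

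The paper regularizes the coefficient instead of the dual solution:
\[
f_\epsilon(x,t)=\int_0^1 f'\bigl(\theta u_\epsilon(x+\epsilon^{1/2},t)+(1-\theta)u_\epsilon(x-\epsilon^{1/2},t)\bigr)\,d\theta .
\]
The argument then runs as follows.
\begin{enumerate}
\item $f_\epsilon$ converges at \emph{every} point to $f'(u)$ as redefined in \eqref{newdefinition of u}, including the Rankine--Hugoniot speed at jumps.
\item The dual problem $\phi^\epsilon_t+f_\epsilon\phi^\epsilon_x=\psi$, $\phi^\epsilon(\cdot,T)=0$, is solved classically by characteristics.
\item The one-sided Lipschitz bound on $u_\epsilon$ controls $\phi^\epsilon_x$ uniformly in $\epsilon$ on $[s,T]$.
\item The error term $\int\!\!\int(f_\epsilon-f'(u))\phi^\epsilon_x\,d\bar R\,dt$ tends to zero by dominated convergence with respect to $d\bar R\,dt$.
\end{enumerate}
The last step is exactly where convergence at every point, not merely a.e., is needed.
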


\begin{proof}
{\bf Step 1:}    Recall that for all $\phi \in C_c^{\infty}(\mathbb{R}\times [0,\infty))$ the function R satisfies the weak formulation
 \begin{equation*}
 \int_0^\infty\int_{-\infty}^\infty R\phi_tdxdt- \int_0^\infty\int_{-\infty}^\infty f'(u(x,t))\phi(x,t)dR(x)dt=0.
\end{equation*}


Define $$ h_\epsilon(t)=\int_{0}^{\frac{t-s}{\epsilon}}\eta(z)dz,$$
where $\eta \in C_c^{\infty} (0, \infty)$. Then $h_\epsilon'(t)=\frac{1}{\epsilon}\eta(\frac{t-s}{\epsilon})$. \\
       
Now, choose the test function $\psi\in C_c^\infty (\mathbb{R}\times(0,\infty))$ defined as $\psi(x,t):=\phi(x,t)h_\epsilon(t)
$. Then we have
 \begin{equation*}
 \int_0^\infty\int_{-\infty}^\infty R\psi_tdxdt- \int_0^\infty\int_{-\infty}^\infty f'(u(x,t))\psi(x,t)dR(x)dt=0.
\end{equation*}
That is,
\begin{equation*}
 \int_0^\infty\int_{-\infty}^\infty R\phi_th_\epsilon(t)dxdt+\int_0^\infty\int_{-\infty}^\infty R\phi h_\epsilon' (t)dxdt- \int_0^\infty\int_{-\infty}^\infty f'(u(x,t))\phi(x,t)h_\epsilon(t)dR(x)dt=0.
\end{equation*}
Let us define $g:[0,\infty)\to \mathbb{R}$ as $g(t):=\int_{-\infty}^\infty R(x,t)\phi(x,t)dx$. Then the above equation becomes the following.
\begin{equation*}
 \int_0^\infty\int_{-\infty}^\infty R\phi_th_\epsilon(t)dxdt+\int_0^\infty\int_{-\infty}^\infty g(t) h_\epsilon' (t)dxdt- \int_0^\infty\int_{-\infty}^\infty f'(u(x,t))\phi(x,t)h_\epsilon(t)dR(x)dt=0.
\end{equation*}
Using the second and third assumptions,  passing to the limit as $\epsilon \to 0$, we obtain
\begin{equation*}
\begin{aligned}
 \int_0^\infty\int_{-\infty}^\infty R\phi_tH(t-s)dxdt&=-\lim_{\epsilon\to 0}\int_0^\infty\int_{-\infty}^\infty R\phi h_\epsilon' (t)dxdt+ \int_0^\infty\int_{-\infty}^\infty f'(u(x,t))\phi(x,t)h_\epsilon(t)dR(x,t)dt \\
 &=-\lim_{\epsilon\to 0}\frac{1}{\epsilon} \int_0^\infty g(t) \eta (\frac{t-s}{\epsilon})dt+ \int_s^\infty\int_{-\infty}^\infty f'(u(x,t))\phi(x,t)dR(x)dt\\
 &=-\lim_{\epsilon\to 0}\int_{-\frac{s}{\epsilon}}^\infty g(s+\epsilon z) \eta (z)dz+ \int_s^\infty\int_{-\infty}^\infty f'(u(x,t))\phi(x,t)dR(x,t)dt\\
 &=-g(s)+\int_s^\infty\int_{-\infty}^\infty f'(u(x,t))\phi(x,t)dR(x,t)dt\\
 &=-\int_{-\infty}^\infty R(x,s)\phi(x,s)dx +\int_s^\infty\int_{-\infty}^\infty f'(u(x,t))\phi(x,t)dR(x,t)dt.
\end{aligned}
\end{equation*}
Now, we have
 \begin{equation*}
 \int_s^\infty\int_{-\infty}^\infty R\phi_tdxdt+\int_{-\infty}^{\infty}R(x,s)\phi(x,s)dx- \int_s^\infty\int_{-\infty}^\infty f'(u(x,t))\phi(x,t)dR(x)dt=0.
\end{equation*}
Now in perticular, if we choose $\phi_x$ for some $\phi \in C_c^\infty(\mathbb{R}\times[0,\infty))$ then we have 
 \begin{equation*}
 \int_s^\infty\int_{-\infty}^\infty R\phi_{xt}dxdt+\int_{-\infty}^{\infty}R(x,s)\phi_x(x,s)dx =\int_s^\infty\int_{-\infty}^\infty f'(u(x,t))\phi_x(x,t)dR(x)dt.   
\end{equation*}
That is,
\begin{equation}
    \int_s^\infty \int_{-\infty}^\infty(\phi_t+f'(u)\phi_x)dRdt=\int_{-\infty}^\infty \phi_x(x,s)R(x,s)dx.
\end{equation}
Suppose $R_1$ and $R_2$ are two solutions to the PDE with the same initial condition $R_i(x,0)=R(x,0)=\int_0^x\rho_0(y)dy$. Since they both satisfy the above equation, for $\bar R:=R_1-R_2$ we have 
\begin{equation}
\label{4.22}
 \int_s^\infty \int_{-\infty}^\infty (\phi_t+f'(u)\phi_x)d\bar Rdt=\int_{-\infty}^\infty \phi_x(x,s)(R_1(x,s)-R_2(x,s))dx .    
\end{equation}

Let us define $f_\epsilon:\mathbb{R}\times[0,\infty)\to \mathbb{R}$ as $\int_0^1 f'(su_\epsilon(x+\epsilon^{\frac{1}{2}},t)+(1-s)u_\epsilon(x-\epsilon^{\frac{1}{2}},t)ds$.

 {\bf Step 2:} In this step, we prove that $f_\epsilon(x,t)\to f'(u(x,t))$ pointwise in $\mathbb{R}\times [0,\infty)$. Observe the following general fact. Let \(w\) be a bounded function such that the right limit \(w(x+)\) exists, and let \(\eta \in C_c^\infty(\mathbb{R})\) be a Friedrichs' mollifier. Define
\[
w_\epsilon(x)= (w*\eta_\epsilon)(x+\epsilon^{1/2}).
\]
Then
\[
\begin{aligned}
w_\epsilon(x)
&=\int_{-\infty}^{\infty} w(x+\epsilon^{1/2}-y)\,\eta_\epsilon(y)\,dy \\
&=\frac{1}{\epsilon}\int_{-\infty}^{\infty} w(x+\epsilon^{1/2}-y)\,
\eta\!\left(\frac{y}{\epsilon}\right)dy \\
&=\int_{-\infty}^{\infty} w(x-\epsilon z+\epsilon^{1/2})\,\eta(z)\,dz .
\end{aligned}
\]
Hence, as \(\epsilon \to 0\),
\[
\lim_{\epsilon\to 0} w_\epsilon(x)=w(x+).
\]
Similarly, if \(w\) is bounded and the left limit \(w(x-)\) exists, then one analogously obtains
\[
\lim_{\epsilon\to 0} w_\epsilon(x)=w(x-).
\]
Suppose $u$ is continuous at $(x,t)$, using the above general fact, 
\begin{equation*}
    \begin{aligned}
        \lim_{\epsilon\to 0}f_\epsilon(x,t)&=\lim_{\epsilon\to 0}\int_0^1 f'(su_\epsilon(x+\epsilon^{\frac{1}{2}},t)+(1-s)u_\epsilon(x-\epsilon^{\frac{1}{2}},t)ds\\
        &=\int_0^1 f'(su(x+,t)+(1-s)u((x-,t))ds=f'(u(x,t).
        \end{aligned}
        \end{equation*}
 If $(x,t)$ is a point of discontinuity, then
        \begin{equation*}
    \begin{aligned}
       &\lim_{\epsilon\to 0}f_\epsilon(x,t)\\&=\frac{1}{u(x+,t)-u(x-,t)}\int_0^1 f'(su(x+,t)+(1-s)u((x-,t))(u(x+,t)-u(x-,t))ds\\
        &=\frac{1}{u(x+,t)-u(x-,t)}\int_0^1 \frac{d}{ds}f(su(x+,t)+(1-s)u(x-,t))ds\\
        &=\frac{f(u(x+,t))-f(u(x-,t))}{u(x+,t)-u(x-,t)}.
    \end{aligned}
\end{equation*}

{\bf Step 3:} Let us choose a test function $\psi \in C_c^{\infty}(\mathbb{R}\times(0,T))$. Consider the following equation
\begin{equation}
\label{4.23}
    \begin{aligned}
        \phi_t^\epsilon+&f_\epsilon\phi_x^\epsilon =\psi \text{ in } \mathbb{R}\times[0,T]\\
        &\phi^\epsilon(x,T)=0.
    \end{aligned}
\end{equation}
We solve the above equation\eqref{4.23} using the method of charecteristics.  Let $x^\epsilon$ be the characteristic curve that satisfies
\begin{equation*}
    \begin{aligned}
        &\dot x^\epsilon(s)=f_{\epsilon}(x^\epsilon(s),s)\\
    &x^\epsilon(t)=x.
    \end{aligned}
\end{equation*}
Then the solution $\phi^\epsilon$ at the point $(x,t)$ is given by $$\phi^\epsilon(x,t)=-\int_t^T\psi(x^\epsilon(s),s)ds.$$
Since \(\psi\) is compactly supported, there exists \(0<\tau_1<\tau_2<T\) such that
\[
\psi(x,t)=0 \qquad \text{for } t\notin [\tau_1,\tau_2].
\]
Consequently, \(\phi^\epsilon\) is also supported in \([\tau_1,\tau_2]\) with respect to the time variable. Moreover, assume that \(\psi\) is supported in the spatial interval \([-M,M]\).
Since  $f_\epsilon$ is bounded, the characteristic equation
$|\dot x^\epsilon(s)|= |f_\epsilon(x^\epsilon(s),s)|$ implies that the slope of the characteristic curve is bounded. So, outside a compact set in $\mathbb{R}$, the function $\phi_\epsilon$ vanishes with respect to the $x$ variable as well. So, we can conclude that the function $\phi_\epsilon$ satisfying \eqref{4.23} is in $C_c^\infty(\mathbb{R}\times [0,\infty))$.\\ 

\noindent
{\bf Step 4:} In this step, we prove that given $s>0$, there exists a constant $C_s$ such that $|\phi^\epsilon_x(x,t)|\leq C_s $ for $s\leq t\leq T$.
For $(x,t)\in \mathbb{R}\times(s,T)$ we have 
\begin{equation*}
    \begin{aligned}
        &\dot x^\epsilon(s;x)=f_{\epsilon}(x^\epsilon(s;x),s)\\
    &x^\epsilon(t;x)=x.
    \end{aligned}
\end{equation*}
Now, differentiating $\dot x^\epsilon(.,x)$ with respect to x we get 
\begin{equation}
    \begin{aligned}
        &\dot x^\epsilon_x(s;x)= f_{\epsilon x}(x^\epsilon(s;x),s) x^\epsilon_x(s;x)\\
        & x^\epsilon_x(t;x)=1.
    \end{aligned}
\end{equation}
Solving this ODE, we get 
\begin{equation*}
   x^\epsilon_x(s;x)=1+\int_t^s f_{\epsilon x}(x^\epsilon(\tau;x),\tau)x^\epsilon_x(\tau;x)d\tau.
\end{equation*}
Applying Gronwall's inequality, we get
\begin{equation*}
    |x^\epsilon_x(s;x)|\leq \exp\left(\int_t^sf_{\epsilon x}(x^\epsilon(\tau;x),\tau)d\tau\right).
\end{equation*}
Now $$f_{\epsilon x}(x,t)=\left(\int_0^1f''(su_\epsilon(x+\epsilon^{\frac{1}{2}},t)+(1-s)u_\epsilon(x-\epsilon^{\frac{1}{2}},t))ds\right)(su_{\epsilon x}(x+\epsilon^{\frac{1}{2}},t)+(1-s)u_{\epsilon x}(x-\epsilon^{\frac{1}{2}},t))$$
Now due to \cite[Theorem 2.6(vii)]{manish}(one side Lipschitz property) we see that for $z>0$ and $t\in [s,T]$,
\begin{equation*}
\begin{aligned}
    \frac{u_\epsilon(x+z,t)-u_\epsilon(x,t)}{z}&=\int_{-\infty}^\infty \frac{u(x-y+z)-u(x-y)}{z}\eta_\epsilon(y)dy\\
    &\leq \frac{C}{\int_0^te^{\beta(\tau)-\beta(t)}d\tau}\leq CM_s:=D_s\\
    &\left(\textnormal{where } M_s=\frac{1}{\inf_{\tau\in [s,T]}\int_0^te^{\beta(\tau)-\beta(t)}d\tau}\right)
\end{aligned}
\end{equation*}
Thus, taking $z\to 0$ we get $|u_x^\epsilon(x,t)|\leq D_s$ for all $(x,t)\in \mathbb{R}\times[s,T]$. Consequently $|f_{\epsilon x}(x,t)|\leq C_s$ for some constant $C_s$ depending on s.\\
 Now, differentiating $\phi^\epsilon(x,t)$ with respect to $x$ we get
 \begin{equation*}
 \begin{aligned}
 \phi_x^\epsilon(x,t)&=-\int_t^T\psi_x(x^\epsilon(s),s) x^\epsilon_x(s;x)ds\\
 &=-\int_t^T\psi_x(x^\epsilon(s),s)f_x^{\epsilon}(x^\epsilon(s;x),s)ds.
 \end{aligned}
 \end{equation*}
 Hence, for $s<t\leq T$,
 \begin{equation*}
 |\phi_x^\epsilon(x,t)|\leq ||\psi_x||_{L_\infty}(T-t)D_s\leq ||\psi_x||_{L_\infty}D_sT  :=C_s . 
 \end{equation*}
 
 {\bf Step 5:} Using \eqref{4.22} and \eqref{4.23} now we can write
 \begin{equation}
 \label{4.25}
     \begin{aligned}
         \int_s^T\int_{-\infty}^\infty \psi(x,t)d\bar R dt&=\int_s^T\int_{-\infty}^\infty (\phi_t^\epsilon+f_\epsilon\phi_x^\epsilon) d\bar R dt\\
         &=\int_s^T\int_{-\infty}^\infty (f_\epsilon-f'(u))\phi_x^\epsilon) d\bar R dt+\int_s^T\int_{-\infty}^\infty (\phi_t^\epsilon+f'(u)\phi_x^\epsilon) d\bar R dt\\
         &=\int_s^T\int_{-\infty}^\infty (f_\epsilon-f'(u))\phi_x^\epsilon) d\bar R dt+\int_{-\infty}^\infty \phi_x(x,s)(R_1(x,s)-R_2(x,s))dx
     \end{aligned}
 \end{equation}
 Now
\begin{equation*}
    \begin{aligned}
     \int_{-\infty}^\infty \phi_x(x,s)(R_1(x,s)-R_2(x,s))dx& \leq \int_{-\infty}^\infty| \phi_x(x,s)(R_1(x,s)-R_2(x,s))|dx\\
     & \leq M||R_1(.,s)-R_2(.,s)||_{L^\infty}\\
     &\leq M(||R_1(.,s)-R_1(.,0)||_{L^\infty}+||R_2(.,s)-R_2(.,0)||_{L^\infty})\to0 \text{ as } s\to 0.
    \end{aligned}
\end{equation*}
So the second term of the right hand side of the equation \eqref{4.25} vanishes.
 Let us denote  $I_\epsilon:=\int_s^T\int_{-\infty}^\infty (f_\epsilon-f'(u))\phi_x^\epsilon) d\bar R dt$ which is the remaining term.\\
Then since from step 4 for $[s,T]$,  $|\phi_x^\epsilon(x,t)|\leq C_s$ and $f_\epsilon\to f'(u)$ almost everywhere and also since $\phi ^\epsilon_x$ is compactly supported, we can apply DCT and conclude that $I_\epsilon\to 0$ as $\epsilon\to 0$.\\

Consequently, taking $s\to 0$ and $\epsilon\to 0$  we finally get
\begin{equation*}
    \int_0^T\int_{-\infty}^\infty \psi(x,t)d\bar R dt=0 \textnormal{ for all } \psi\in C_c^\infty(\mathbb{R}\times (0,T).
\end{equation*}
Since this happens for all $T>0$, we can conclude that $\bar R$ is zero almost everywhere. So the solution $R$ is unique.

\end{proof}

\begin{remark}{\bf Uniqueness of Solution for system \eqref{equation 1}:}
Suppose $\rho_1, \rho_2$ are two solutions for the system \eqref{equation 1} with initial conditions \eqref{equation 2}. Then we can find $R_1, R_2$ that satisfy $\partial_xR_1=\rho_1$ and $\partial_xR_2=\rho_2$. But $R_1, R_2$ both satisfy the system \eqref{equation for R} with initial condition \eqref{initial condition for u and R}. From the above theorem we have $R_1=R_2$. Consequently $\rho_1=\rho_2$. So, the solution of \eqref{equation 1} exists and is unique.
\end{remark}

\bibliographystyle{plain}	
\bibliography{Reference}
\end{document}